\newlength{\longeqmarginwidth}
\newlength{\longeqwidth}
\newlength{\longeqskiplength}
\newcommand{\Vzero}[2]{\mathord{\stackrel{\textstyle\kern1pt\circ}
{\smash V\vbox to6pt{}}}\vphantom{V}_{#1}^{#2}}
\newcommand{\function}[2]{:#1 \longrightarrow #2}
\newcommand{\of}[1]{\left( #1 \right)}
\newcommand{\absvalue}[1]{\left| #1 \right|}
\newcommand{\set}[2]{\left\{\hspace{0.2ex} #1 \left|\: #2
\right. \right\}}
\newcommand{\vecset}[2]{\left(\hspace{0.2ex} #1 \left|\: #2 \right. \right)}
\newcommand{\eval}[2]{\llbracket #1 \rrbracket_{#2}}
\newcommand{\df}{\stackrel{\rm def}{=}}
\newcommand{\rn}{\boldsymbol}
\newcommand{\expect}[1]{ {\bf E}\! \left[ #1 \right] }
\newcounter{operator}
\title{More about sparse halves in triangle-free graphs}
\author{Alexander Razborov\thanks{University of Chicago, USA, {\tt razborov@math.uchicago.edu}
and Steklov Mathematical Institute, Moscow, Russia, {\tt razborov@mi.ras.ru}.}}
\begin{document}
\maketitle

\begin{abstract}
One of Erd\H{o}s's conjectures states that every triangle-free graph on $n$
vertices has an induced subgraph on $n/2$ vertices with at most $n^2/50$
edges. We report several partial results towards this conjecture. In
particular, we establish the new bound $\frac{27}{1024}n^2$ on the number
of edges in general case. We completely prove the conjecture for graphs of
girth $\geq 5$, for graphs with independence number $\geq 2n/5$ and for
strongly regular graphs. Each of these three classes includes both known
(conjectured) extremal configurations, the 5-cycle and the Petersen graph.
\end{abstract}

\section{Introduction} \label{sec:intro}
Throughout his long career, Erd\H{o}s repeatedly \cite{Erd6,Erd3,Erd7} asked
several questions united by one common theme: how far from being bipartite
can a triangle-free graph be. One of them, the ``pentagon problem'', was
completely solved in \cite{erdos,Grz}. Another question asks what can be the
maximum possible $\ell_1$-distance (which in this case is simply the number
of edges deleted) from a triangle-free graph to the class of bipartite
graphs. It was studied in \cite{EFP*,EGS,BCL}.

This paper is devoted to the third question, ``half-graph'' conjecture
sometimes referred to as ``one of Erd\H{o}s's favorite'' \cite{KeS2}. Given a
triangle-free graph $G$, is it always possible to remove half of its vertices
such that the edge density $\frac{|E(G)|}{2|V(G)|^2}$ becomes $\leq 1/25$? In
this direction, there has been more recent work done \cite{EFR*, Kri,
KeS2,NoY} although the conjecture still remains widely open.

\medskip
In this paper we improve on several statements from those papers and offer
some new results.

Fix a triangle-free graph $G$ on $n$ vertices and let $\beta(G)$ be the
minimum number of edges in its half-graphs, normalized\footnote{It would have
been much more natural to normalize by $\frac{n^2}2$ instead but we prefer
our notation to be consistent with the literature.} by $n^2$. The {\em
half-graph conjecture} by  Erd\H{o}s says that $\beta(G)\leq \frac 1{50}$,
for any triangle-free $G$. The bound $\beta(G)\leq \frac 1{16}$ is obvious
(attained by the random half), \cite{EFR*} proved that $\beta(G)\leq \frac
1{30}$ and \cite{Kri} improved this to $\beta(G)\leq \frac 1{36}$.

\smallskip \noindent
{\bf Theorem.} {\em For any triangle-free graph $G$, $\beta(G)\leq
\frac{27}{1024}$.}

The number $\frac{27}{1024}$ here is not arbitrary, it reflects what can be
achieved with a certain class of methods, and the Clebsch graph is an
extremal example for the resulting extremal problem. We will comment more on
it below.

\smallskip
The (conjectural) extremal examples in the half-graph conjecture are the
pentagon $C_5$ and the Petersen graph. The former does not contain induced
matching of size 2 as well.

\smallskip \noindent
{\bf Theorem.} {\em The half-graph conjecture is true for any {\rm
(}triangle-free{\rm )} graph without induced matchings of size 2}.

\smallskip
Before going any further, let us briefly discuss the proofs of these two
theorems as they bring about potentially interesting concepts and questions.

\medskip \noindent
{\bf Digression on quadriliterals counting.} Let $\rho=\rho(G)$ and
$C_4=C_4(G)$ be the edge density of $G$ and the density of quadriliterals
(copies of $C_4$) in it. They are computed in the sense of flag
algebras/graph limits: $G$ is replaced first with its infinite blow-up (so
that in particular copies of the path $P_3$ in $G$ and even individual edges
contribute to $C_4(G)$). These two quantities are of fundamental importance
in the theory of quasi-random graphs: $C_4\geq 3\rho^4$, and an increasing
sequence of graphs with the same value of $\rho(G)$ is quasi-random if and
only if this inequality is asymptotically tight \cite{CGW}.

For triangle-free graphs this bound can be easily improved to
\begin{equation} \label{eq:trivial_bound}
C_4 \geq \frac{3\rho^4}{1-\rho}.
\end{equation}
This is a quantitative refinement of the statement that triangle-free graphs
are not quasi-random, and thus it is natural to ask: what is the {\em
smallest} value of $C_4(G)$ as a function of $\rho(G)$? In a sense, it is a
dual to Erd\H{o}s's questions. The latter ask, in one or another form, how
far from being {\em bi-partite} a triangle-free graph {\em can} be. The
``quadriliteral question'', on the contrary, is asking how far from {\em
quasi-random} a triangle-free graph {\em must} be.

We expect this question to be extremely difficult in general. But it is very
tightly related to Erd\H{o}s's conjectures as was already demonstrated in
\cite[Section 2]{EFP*}. In our context, an easy analysis of [part of]
Krivelevich's proof, followed by a straightforward averaging gives:

\begin{proposition} \label{prop:krivilevich}
$$
\beta(G)\leq \frac 18\rho(G) - \frac{C_4(G)}{12\rho(G)}.
$$
\end{proposition}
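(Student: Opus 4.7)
The plan is to reuse Krivelevich's per-vertex construction and average over the vertex. The baseline is the trivial random-half bound: a uniformly random $S\subseteq V(G)$ of size $n/2$ gives $\expect{|E(G[S])|}\sim\rho(G)n^2/8$, yielding $\beta(G)\leq\rho(G)/8$; the remaining task is to subtract $C_4(G)/(12\rho(G))$ from this baseline.

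Krivelevich's refinement replaces the uniform half by one that avoids a chosen neighborhood. Since $G$ is triangle-free, each $N(v)$ is an independent set, so every edge incident to $N(v)$ crosses into $W_v:=V\setminus N(v)$. Writing $f(v)=|E(G[W_v])|$, triangle-freeness gives the identity
$$f(v)=|E(G)|-\sum_{u\in N(v)}d(u).$$
A uniformly random $n/2$-subset of $W_v$ (valid when $d(v)\leq n/2$; the complementary case gives $\beta(G)=0$ directly, since any $S\subseteq N(v)$ contains no edges) produces a per-vertex upper bound for $\beta(G)$ of order $f(v)/(4(n-d(v))^2)$. This is the ``part of Krivelevich's proof'' alluded to in the text.

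The averaging step converts this pointwise estimate into a global one. Summing $f(v)$ over $v$ yields $n|E|-\sum_vd(v)^2$, and the standard identity $\sum_vd(v)^2=2|E|+2P_3(G)$ introduces the count $P_3(G)$ of 2-paths. To substitute $C_4$ for $P_3$, I apply Cauchy--Schwarz to the codegree sequence of non-edges: in a triangle-free graph, every 2-path has non-adjacent endpoints, so
$$\sum_{uw\notin E}d(u,w)^2\;\geq\;\frac{\bigl(\sum_{uw\notin E}d(u,w)\bigr)^2}{\binom{n}{2}-|E|},$$
and the left-hand side equals $C_4(G)\cdot n^4$ (up to a lower-order $\sum_vd(v)^2$ correction from degenerate tuples) in the flag-algebra normalization. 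Rearranging bounds $P_3(G)$ from above by a constant multiple of $C_4/\rho$, and substituting back into the averaged per-vertex inequality produces the claimed bound.

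The step I expect to require the most care is isolating the precise constant $\tfrac{1}{12}$. Each of the three ingredients (the per-vertex estimate, the averaging weighted by $(n-d(v))^{-2}$, and the Cauchy--Schwarz conversion from $P_3$ to $C_4$) contributes a factor of $2$ or $(1-\rho)$, and matching $\tfrac{1}{12}$ exactly---rather than a nearby constant such as $\tfrac{1}{8}$ or $\tfrac{1}{16}$---depends on the flag-algebra conventions for $C_4$ (homomorphism versus induced-subgraph density) and on the $n^2$-versus-$\binom{n}{2}$ normalization of $\beta$. Beyond this bookkeeping, no new substantive idea is required; the structural content is entirely Krivelevich's, and the novelty of the proposition is that a single straightforward average over $v$ already introduces the $C_4$ term.
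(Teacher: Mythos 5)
The proposal diverges from the paper's argument in a way that creates a genuine gap, not merely a bookkeeping one. The paper's proof is a per-\emph{edge} argument, not a per-vertex one: for an edge $(v_1,v_2)$ with $A_i=N_G(v_i)$ and $B=V\setminus(A_1\cup A_2)$, it constructs the two halves $\mu_1,\mu_2$ supported on $A_1\cup B$ and $A_2\cup B$, forms the convex combination $p_2\cdot\refeq{eq:mu_1}+p_1\cdot\refeq{eq:mu_2}$, and obtains $2\beta(G)\leq p_1p_2(\rho-\rho_{a_1a_2})\leq\tfrac14(\rho-\rho_{a_1a_2})$. The decisive feature is that $\rho_{a_1a_2}$ --- the density of edges between the two neighborhoods --- \emph{is} the $C_4$-contribution of the chosen edge, so averaging over edges produces the term $\tfrac23 C_4$ with no auxiliary inequality at all. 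Your per-vertex scheme has no analogue of this: summing $f(v)=|E|-\sum_{u\in N(v)}d(u)$ over $v$ yields $\sum_v d(v)^2$, i.e.\ a $P_3$-count, and no straightforward average of a per-vertex quantity produces $C_4$.

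Your proposed $P_3\to C_4$ conversion then fails on two independent counts. First, the sign is wrong: in $\sum_v f(v)=n|E|-\sum_v d(v)^2$ the term $\sum_v d(v)^2=2|E|+2P_3$ appears with a \emph{negative} sign, so a larger $P_3$ improves the averaged bound and what is needed is a \emph{lower} bound on $P_3$ in terms of $C_4$; Cauchy--Schwarz yields an \emph{upper} bound, which goes the wrong way when substituted. Second, the shape is wrong: $\sum_{uw\notin E}d(u,w)^2\geq P_3^2/\bigl(\binom{n}{2}-|E|\bigr)$ together with $\sum_{uw\notin E}d(u,w)^2=\tfrac12 C_4(G)n^4+O(n^3)$ gives $P_3\lesssim n^3\sqrt{C_4}$, a square-root dependence, not the linear ``constant multiple of $C_4/\rho$'' you assert; the two are not interchangeable in any regime relevant here. (There is also the unaddressed issue that the per-vertex denominators $(n-d(v))^2$ depend on $v$, so the averaging step does not reduce to a clean sum of $f(v)$.) The missing idea is precisely the paper's: work with an edge so that the cross-density between the two neighborhoods appears, and recognize it as the local $C_4$-density via the flag-algebra operator $\eval{\cdot}{E}$.
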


Both bounds on $\beta(G)$ then follow from the following

\smallskip \noindent
{\bf Theorem.} {\em
\begin{enumerate}
\item For any triangle-free graph $G$,
$$
C_4(G) \geq \frac 32\rho(G)^2 - \frac{81}{256} \rho(G).
$$

\item For any triangle-free graph $G$ without induced matchings of size 2,
$$
C_4(G) \geq \frac 32\rho(G)^2 - \frac{6}{25} \rho(G).
$$
\end{enumerate}
}

This theorem is proved via a ``medium size'' flag-algebraic calculation. The
second bound is tight for $\rho=2/5$, as it must be since the pentagon $C_5$
is the (conjectural) extremal example for the half-graph conjecture. The
first inequality beats the trivial bound \eqref{eq:trivial_bound} for
$0.257\leq\rho \leq 0.366$. It is tight for $\rho=5/16$, the extremal example
being the Clebsch graph, and this seems to be the only non-trivial value of
$\rho(G)$ for which we know the exact solution to the quadriliteral problem.

\smallskip
We further remark that our bound $\beta(G)\leq \frac{27}{1024}$ is tight for
a reasonably natural restriction of Erd\H{o}s's conjecture. More
specifically, many previous results, including Proposition
\ref{prop:krivilevich}, are based on the following simple construction. Pick
an edge $e\in E(G)$. It naturally defines a splitting of $V(G)$ into three
parts. The sparse half constructed by this method is determined by assigning
the same weight within each of these parts. Then the value $\frac{27}{1024}$
in the half-graph conjecture is the best one can achieve using this method,
with the Clebsch graph being (again) an extremal example.

\medskip
Let us now return to reviewing the remaining results. The half-graph
conjecture is obvious if $\rho(G)\leq 4/25$ (once more, the random half will
do the job). Keevash and Sudakov \cite{KeS2} relaxed this to $\rho(G)\leq
1/16$.

\smallskip \noindent
{\bf Theorem.} {\em The half-graph conjecture is true for any triangle-free
graph with $\rho(G)\leq \frac{33-\sqrt{161}}{116}\approx 0.1751$.}

\smallskip
Based on this theorem, we prove the following:

\smallskip \noindent
{\bf Theorem.} {\em The half-graph conjecture is true for any triangle-free
strongly regular graph.}

\smallskip
A significant amount of activity took place around the critical value
$\rho=2/5$. \cite[Theorem 3]{Kri} proved the conjecture for regular
triangle-free graphs with $\rho(G)\geq 2/5$ and \cite{KeS2} removed the
restriction of regularity. Norin and Yepremyan \cite{NoY} improved this
result by relaxing the assumption $\rho(G)\geq 2/5$ to $\rho(G)\geq
2/5-\gamma$, where $\gamma>0$ is a (calculable) constant. When $\rho(G)$ is
replaced by the (normalized) {\em minimum} degree $\delta(G)$, the bound on
$\gamma$ significantly improves and the half-graph conjecture is true
whenever $\delta(G)\geq \frac 5{14}$ \cite{NoY}.

\smallskip \noindent
{\bf Theorem.} {\em Let $\alpha(G)$ be the normalized  {\rm (}by $n${\rm )}
independence number of $G$, and assume that $\alpha(G)\geq 3/8$. Then
$$
\beta(G)\leq \frac 12\alpha(G)\of{\frac 12-\alpha(G)}.
$$
}

\noindent
{\bf Corollary.} {\em The half-graph conjecture holds for any
triangle-free graph with {\rm (}normalized{\rm )} {\bf maximum} degree $\geq
2/5$.}

 Note that unlike the previous results we do not require {\em all}
 vertices to have large degree, even on average, but just one. Also, this
 theorem covers the Petersen graph as well since it has (unnormalized) independence
 number 4.  On the negative side, we have not been able to extend it to an
 open neighbourhood of $2/5$ as the previous work did.

\smallskip
 Finally, both conjectured extremal examples have girth 5.

 \smallskip \noindent
 {\bf Theorem.} {\em The half-graph conjecture holds for all graphs of girth
 $\geq 5$.}

 \medskip
 The rest of the paper is organized as follows. In Section \ref{sec:prel} we
 give all necessary definitions. In Section \ref{sec:results} we re-state
 our results, mostly as a matter of convenience. Section \ref{sec:proofs} is
 devoted to proofs, and we conclude in Section \ref{sec:concl} with a few
 remarks and open questions.

 \section{Preliminaries} \label{sec:prel}

 Unless specified otherwise, all graphs $G$ in this paper are finite, simple and
 triangle-free. $N_G(v)$ is the neighbourhood of $v$, and $n$ will always stand
 for the number of vertices. For disjoint sets of vertices $X$ and $Y$,
 $E(X,Y)$ is the set of cross-edges between $X$ and $Y$; likewise, $E(X)$ is
 the set of edges induced by $X$.

 A {\em half} in a graph $G$ with $n$ vertices is a function
 $\mu\function{V(G)}{[0,1]}$ such that $\sum_{v\in V(G)}\mu(v)=n/2$. We let
 $$
\beta(G,\mu) \df \frac 1{n^2}\sum_{(u,v)\in E(G)}\mu(u)\mu(v)
 $$
and
$$
\beta(G) \df \min\vecset{\beta(G,\mu)}{\mu\ \text{is a half}}.
$$
It is easy to see by a simple convexity argument that when $n$ is even, the
minimum is attained at a 0-1 half in which case we will also use the notation
$\beta(G,A),\ A \in {V(G)\choose n/2}$. When $n$ is odd, it is attained at an
almost 0-1 half $\mu$, that is $\mu(v_0)=1/2$ for a single vertex $v_0$ and
$\mu(v)\in \{0,1\}$ for all others. But the analytical form above is
extremely handy in concrete constructions, as we shall see.

\begin{conjecture}[Half-graph conjecture by Erd\H{o}s] \label{conj:erdos}
$\beta(G)\leq \frac 1{50}$ for any triangle-free graph $G$.
\end{conjecture}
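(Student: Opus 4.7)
My plan is to reduce the conjecture, via Proposition~\ref{prop:krivilevich}, to a universal lower bound on the quadrilateral density. Writing out the reduction, the bound $\beta(G)\leq 1/50$ is implied by
$$
C_4(G) \;\geq\; \frac{3}{2}\rho(G)^2 - \frac{6}{25}\rho(G)
\qquad (\star)
$$
for every triangle-free $G$; on the complementary range $\rho(G)\leq 4/25$ the random half $\mu\equiv 1/2$ already yields $\beta(G)\leq \rho(G)/8\leq 1/50$. Since part~(2) of the unnamed Theorem stated in the excerpt proves exactly $(\star)$ under the extra hypothesis of no induced matching of size~2, the whole problem reduces to removing that hypothesis.

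To attack $(\star)$ in full generality, I would deploy the flag-algebra machinery for triangle-free graphs, choosing the type and flag size large enough to see both conjectured extremal configurations simultaneously: the pentagon $C_5$ at $\rho=2/5$ and the Petersen graph at $\rho=3/5$. The target is a sum-of-squares certificate whose dual witness vanishes on both of these graphs and on their blow-ups. A natural first attempt is to glue together the certificate that proves part~(2) with a second, ``Petersen-centered'' certificate, weighing the two contributions by the local density of induced $M_2$'s, and thereby bootstrapping the $M_2$-free result along a structural decomposition of $G$. The girth-$\geq 5$ theorem and the $\alpha(G)\geq 3/8$ theorem from the excerpt should contribute further linear inequalities to feed into the SDP, tightening the certificate in the regions where $(\star)$ would otherwise be slack.

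The principal obstacle is the second-order behaviour of $(\star)$ at $\rho=2/5$. A near-pentagon blow-up perturbed by a sparse scattering of induced $M_2$'s lies \emph{outside} the regime handled by part~(2), yet deviates from the pentagon extremum by only a second-order amount; the one-edge averaging underlying Proposition~\ref{prop:krivilevich} is intrinsically unable to detect this perturbation. Closing the gap therefore seems to require either a strengthened averaging identity (for instance, over an ordered pair of disjoint edges or over a $P_3$) whose combination with Proposition~\ref{prop:krivilevich} absorbs the parasitic correction, or a genuine pentagon-stability theorem asserting that any near-saturating $G$ must already be structurally close to $C_5$ and hence $M_2$-poor. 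Establishing either of these is, as far as I can see, beyond the reach of current flag-algebra certificates at the required precision, and it is precisely this step that prevents the plan from going through — consistent with the conjecture's long-standing open status. Accordingly, I expect the plan to close cleanly on each sub-regime already covered in the excerpt (girth~$\geq 5$, $\alpha\geq 3/8$, $M_2$-free, small $\rho$, strongly regular), while the final gluing argument is where genuinely new input is required.
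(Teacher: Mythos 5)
There is a fundamental problem here, beyond the fact that you (correctly) concede the argument does not close: the statement you are addressing is Conjecture \ref{conj:erdos}, which the paper does not prove and which remains open --- the paper only establishes partial results. More importantly, the reduction on which your entire plan rests is not merely hard, it is \emph{false}. The universal inequality you call $(\star)$, namely $C_4(G)\geq \frac32\rho(G)^2-\frac{6}{25}\rho(G)$ for every triangle-free $G$, is refuted by both conjectured extremal configurations of the half-graph problem. For the Petersen graph one has $\rho=3/10$ (not $3/5$ as you write; for a $k$-regular graph $\rho=k/n$), and the strongly regular formula from Section \ref{sec:strongly_regular} gives $C_4=\frac{3}{10^3}\of{3^2+1^2\cdot 6}=0.045$, whereas $\frac32\rho^2-\frac{6}{25}\rho=0.135-0.072=0.063$. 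The Clebsch graph violates it as well: $C_4=\frac{195}{4096}\approx 0.0476$ against $\approx 0.0715$. Consequently no flag-algebra (sum-of-squares) certificate, however large the flags, can establish $(\star)$, and no gluing of the $M_2$-free certificate with a ``Petersen-centered'' one can succeed, since the target inequality fails on the Petersen graph itself. This is why Theorem \ref{thm:quadriliterals}(2) carries the hypothesis of no induced matching of size 2 (which excludes the Petersen graph), and why part (1) has the weaker constant $\frac{81}{256}$, tight for Clebsch.

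There is also a structural barrier you have misdiagnosed. Any plan of the form ``Proposition \ref{prop:krivilevich} plus a universal lower bound on $C_4$ in terms of $\rho$'' cannot prove anything better than $\beta(G)\leq\frac{27}{1024}>\frac1{50}$: at $\rho=5/16$ the best possible such lower bound is the value attained by the Clebsch graph, and substituting it into Proposition \ref{prop:krivilevich} yields exactly $\frac{27}{1024}$. This is precisely the remark made in the introduction that $\frac{27}{1024}$ is optimal for halves built from an edge $(v_1,v_2)$ with constant weights on the three parts $A_1,A_2,B$, with the Clebsch graph extremal. So the obstruction is not a delicate ``second-order behaviour at $\rho=2/5$'' to be fixed by averaging over pairs of edges or over a $P_3$; the single-edge averaging already loses a definite constant at $\rho=5/16$. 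The honest conclusion is that neither the paper nor your proposal proves Conjecture \ref{conj:erdos}; what can be salvaged are exactly the partial results of the paper (Theorems \ref{thm:improved_bounds}, \ref{thm:no_matchings}, \ref{thm:small_density}, \ref{thm:strongly_regular}, \ref{thm:large_alpha}, \ref{thm:girth}), and any write-up must present them as such rather than as steps of a proof of the conjecture.
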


Let $\rho(G)$ and $C_4(G)$ be the edge density and the density of
quadriliterals {\em defined consistently with flag algebras}. That is,
$$
\rho(G)\df \frac{2|E(G)|}{n^2}
$$
and in order to compute $C_4(G)$ we sample $\rn{v_i}\in V(G)\ (i\in [4])$
uniformly and completely independently (that is, with repetitions), form the
graph on $[4]$ with the set of edges $\set{(i,j)\in {[4]\choose
2}}{(\rn{v_i}, \rn{v_j})\in E(G)}$ and let $C_4(G)$ be the probability that
it is {\em isomorphic} to $C_4$.

For $v\in V(G)$ we let
$$
e(v) \df \frac{|N_G(v)|}{n}
$$
be the relative degree of $v$ and
$$
\Delta(G)\df\max\vecset{e(v)}{v\in V(G)}
$$
be the maximum degree, also relative. Likewise,
$$
\alpha(G) \df \max\vecset{\frac{|A|}{n}}{A\ \text{an independent set}}
$$
is the relative independence number.

We can assume w.lo.g. that $\alpha(G)\leq 1/2$ since otherwise Conjecture
\ref{conj:erdos} is obvious. Thus,
\begin{equation} \label{eq:assumption}
\rho(G) \leq \Delta(G) \leq \alpha(G) \leq 1/2.
\end{equation}

For sets of vertices $A,B,C,D,E,\ldots$ we will denote by
$p_a,p_b,p_c,\ldots$ their densities:
$$
p_x \df \frac {|X|}{n}.
$$
Let $\rho_{xy}\ (x\neq y\in \{a,b,c,d,\ldots\},\ X\cap Y=\emptyset)$ be the
normalized density of cross-edges:
$$
\rho_{xy} \df \frac{2|E(X,Y)|}{n^2},
$$
and, likewise,
$$
\rho_{x} \df \frac{|E(X)|}{n^2}.
$$
Finally, for $v\not\in X$, let
$$
e_X(v) \df \frac{|N_G(v)\cap X|}{n}.
$$

\section{Results} \label{sec:results}
In this section we collect in one place our main results stated in the
introduction.

\begin{theorem} \label{thm:quadriliterals}
\begin{tenumerate}
\item  For any triangle-free graph $G$,
$$
C_4(G) \geq \frac 32\rho(G)^2 - \frac{81}{256} \rho(G)
$$
{\rm (}the bound is tight for the Clebsch graph{\rm ).}

\item For any triangle-free graph $G$ without induced matchings of size 2,
$$
C_4(G) \geq \frac 32\rho(G)^2 - \frac{6}{25} \rho(G).
$$
{\rm (}the bound is tight for $C_5${\rm ).}
\end{tenumerate}
\end{theorem}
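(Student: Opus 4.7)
My plan is to prove both parts of Theorem \ref{thm:quadriliterals} by a semidefinite programming argument inside the flag algebra of triangle-free graphs. I want to exhibit, for each value of $c$, a certificate of the form
\[
C_4(G) - \tfrac{3}{2}\rho(G)^2 + c\,\rho(G) \;=\; \eval{\textstyle\sum_{i,j} M_{ij} F_i^\sigma F_j^\sigma}{\sigma} \;+\; (\text{terms that vanish on triangle-free graphs}),
\]
for some type $\sigma$, flags $F_i^\sigma$ on a few extra vertices, and a positive semidefinite matrix $M$. Here $c=81/256$ for part (1) and $c=6/25$ for part (2).

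First I would enumerate the seven triangle-free graphs on four vertices and expand both $C_4(G)$ and $\rho(G)^2$ as explicit linear combinations of their induced densities, using the standard flag-algebra identity for multiplying unlabeled quantities. Next I would pick small types, most naturally a single labeled vertex (and, if needed for part (1), also a labeled edge), list all $\sigma$-flags on three vertices, and form the SDP whose variables are the entries of $M$ and whose linear constraints come from matching coefficients of the seven $4$-vertex induced densities on both sides of the putative identity.

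For part (2), the extra hypothesis that $G$ has no induced matching of size $2$ is encoded by the single linear relation saying that the induced density of $2K_2$ vanishes. This kills one coefficient and shrinks the SDP considerably, which is why a tidier constant and the cleaner extremal configuration $C_5$ emerge.

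The main obstacle is producing an exact rational certificate from a numerical SDP solution. The standard remedy is to use the conjectured extremal graph to pin down the kernel of $M$. For part (1) this is the Clebsch graph, which has $\rho=5/16$ and must satisfy $C_4=\tfrac{3}{2}(5/16)^2 - (81/256)(5/16)=195/4096$; for part (2) it is $C_5$, with $\rho=2/5$ and $C_4 = 18/125$. Computing the labeled $\sigma$-flag statistics of these graphs produces vectors that must lie in the kernel of any optimal $M$, and imposing this orthogonality drastically reduces the dimension, leaving a small residual SDP which can be solved by hand in exact arithmetic. Tightness at the extremal graphs is then an immediate direct computation of $\rho$ and $C_4$.
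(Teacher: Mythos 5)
Your conceptual framework — a Cauchy--Schwarz / sum-of-squares certificate in the flag algebra of triangle-free graphs, rounded to exact rationals by using the extremal configuration to identify the kernel of the PSD matrix — is precisely what the paper does, including the observation that homomorphisms from the types into the Clebsch graph (resp.\ $C_5$) supply the null vectors, and the encoding of the ``no induced $2K_2$'' hypothesis by adding a multiple of the $2K_2$-density to the right-hand side.

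However, the scale of the computation you propose is far too small, and this is a genuine gap rather than a cosmetic one. You propose to work with the seven triangle-free $4$-vertex graphs, a type on one vertex (or an edge), and $\sigma$-flags on three vertices; a product of two such flags lives at five vertices, so your SDP operates inside $\mathcal M_5$. The paper's certificate for part (1) lives in $\mathcal M_8$ ($410$ triangle-free graphs on $8$ vertices) and uses four types on \emph{four} vertices with flags on \emph{six} vertices, where the flag counts $d_i$ are $110, 81, 67, 46$ and the PSD matrices $Q_i$ have ranks as high as $108$. For part (2) the paper still needs $\mathcal M_6$, with the edge type plus three $4$-vertex types. An SDP built from a single labeled vertex and $3$-vertex flags has only a handful of flags — on the order of five — and cannot resolve the structure that makes the $16$-vertex Clebsch graph extremal; if you set up that SDP you will find it is infeasible at the constant $\frac{81}{256}$ (and likewise at $\frac{6}{25}$ for part (2)). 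You would need to push the type size to $4$ vertices and the flag size to $5$ or $6$, at which point the problem stops being ``solvable by hand in exact arithmetic'' and becomes the kind of symbolic computation the paper outsources to Maple. So the missing idea is not the method but the realization of how large the relaxation must be for these tight constants to become provable, and consequently that a hand computation is not realistic here.
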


\begin{theorem} \label{thm:improved_bounds}
For any triangle-free graph $G$, $\beta(G)\leq \frac{27}{1024}$.
\end{theorem}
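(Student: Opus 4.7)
The plan is almost forced by the way the introduction is written: combine Proposition \ref{prop:krivilevich} with part (1) of Theorem \ref{thm:quadriliterals}. Both ingredients are stated as available, so what remains is to check that the algebra works out to exactly the advertised constant $27/1024$ uniformly in $\rho=\rho(G)$.

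Concretely, I would write $\rho=\rho(G)$ and $C_4=C_4(G)$ and substitute the lower bound $C_4\geq \frac{3}{2}\rho^2-\frac{81}{256}\rho$ from Theorem \ref{thm:quadriliterals}(1) into the upper bound $\beta(G)\leq \frac{\rho}{8}-\frac{C_4}{12\rho}$ from Proposition \ref{prop:krivilevich}. The two $\frac{\rho}{8}$ contributions cancel identically:
\begin{equation*}
\beta(G)\;\leq\;\frac{\rho}{8}-\frac{1}{12\rho}\!\left(\frac{3}{2}\rho^{2}-\frac{81}{256}\rho\right)\;=\;\frac{\rho}{8}-\frac{\rho}{8}+\frac{81}{3072}\;=\;\frac{27}{1024},
\end{equation*}
which is exactly the claimed bound, independent of $\rho$.

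A minor sanity check I would include: since $\beta(G)\geq 0$ and the cited upper bound for $\beta(G)$ in Proposition \ref{prop:krivilevich} is valid for every triangle-free $G$, one is allowed to plug in \emph{any} valid lower bound on $C_4$, even when the right-hand side $\frac{3}{2}\rho^{2}-\frac{81}{256}\rho$ happens to be negative (which occurs for small $\rho$, where in any case the random-half argument $\beta(G)\leq \rho/16$ already gives a much stronger conclusion). So there is no range of $\rho$ that needs separate treatment.

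The fact that the cancellation is exact — and not merely $\leq \frac{27}{1024}$ up to lower-order terms — is the reason the constant looks so unusual. It also explains why the resulting value is tight precisely at the point where Theorem \ref{thm:quadriliterals}(1) is tight, namely $\rho=5/16$ with the Clebsch graph: at that value the optimum of the minimizing function of $\rho$ is attained, and both inequalities we chain together become equalities. The real difficulty of Theorem \ref{thm:improved_bounds}, therefore, is not in this deduction but has been entirely displaced to proving Theorem \ref{thm:quadriliterals}(1), which (as remarked in the introduction) is carried out via a flag-algebraic optimization; that is the step I would expect to consume essentially all of the work.
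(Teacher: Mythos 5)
Your proof is correct and is exactly the paper's argument: Section~\ref{sec:absolute} states that Theorems~\ref{thm:improved_bounds} and~\ref{thm:no_matchings} ``immediately follow from Theorem~\ref{thm:quadriliterals} and Proposition~\ref{prop:krivilevich},'' and the exact cancellation $\frac{\rho}{8}-\frac{1}{12\rho}\cdot\frac{3}{2}\rho^{2}+\frac{81}{256\cdot 12}=\frac{27}{1024}$ is indeed why the constant takes that form. Your sanity check about negative values of the lower bound is a fine observation, though as you note it is moot since the random half already handles small $\rho$.
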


\begin{theorem} \label{thm:no_matchings}
Conjecture {\rm \ref{conj:erdos}} is true for any triangle-free graph without
induced matchings of size 2.
\end{theorem}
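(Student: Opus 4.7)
The plan is to deduce this theorem as an immediate corollary of Proposition \ref{prop:krivilevich} together with part (ii) of Theorem \ref{thm:quadriliterals}. The former gives, for every triangle-free graph,
$$
\beta(G) \leq \frac{1}{8}\rho(G) - \frac{C_4(G)}{12\rho(G)},
$$
while under the hypothesis that $G$ contains no induced matching of size 2, the latter supplies the lower bound $C_4(G) \geq \frac{3}{2}\rho(G)^2 - \frac{6}{25}\rho(G)$.

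Substituting and simplifying, the linear-in-$\rho(G)$ terms cancel and we obtain
$$
\beta(G) \leq \frac{\rho(G)}{8} - \frac{1}{12\rho(G)}\left(\frac{3}{2}\rho(G)^2 - \frac{6}{25}\rho(G)\right) = \frac{\rho(G)}{8} - \frac{\rho(G)}{8} + \frac{1}{50} = \frac{1}{50},
$$
which is exactly the bound of Conjecture \ref{conj:erdos}. The clean cancellation is no accident: the constant $6/25$ in Theorem \ref{thm:quadriliterals}(ii) has evidently been calibrated so that this substitution produces $1/50$, consistently with the fact that the pentagon $C_5$ (with $\rho(C_5)=2/5$ and $\beta(C_5)=1/50$) is simultaneously extremal for both statements.

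Thus the present theorem requires no new combinatorial idea beyond the two previously stated results: the main obstacle has been offloaded into the proof of Theorem \ref{thm:quadriliterals}(ii), described in the introduction as a medium-size flag-algebraic calculation. Given that ingredient, the present theorem reduces to the one-line substitution above, and no further case analysis or averaging is needed.
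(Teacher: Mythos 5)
Your derivation is correct and is exactly the paper's own route: Section \ref{sec:absolute} states that Theorem \ref{thm:no_matchings} (and \ref{thm:improved_bounds}) ``immediately follow from Theorem \ref{thm:quadriliterals} and Proposition \ref{prop:krivilevich},'' and your substitution $\beta(G)\leq \frac{\rho}{8}-\frac{1}{12\rho}\left(\frac{3}{2}\rho^2-\frac{6}{25}\rho\right)=\frac{1}{50}$ is the intended one-line verification. Your remark about the calibration of the constant $6/25$ and the role of $C_5$ also matches the paper's discussion.
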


\begin{theorem} \label{thm:small_density}
Conjecture {\rm \ref{conj:erdos}} is true for any triangle-free graph with
$\rho(G)\leq \rho_0\df \frac{33-\sqrt{161}}{116}$.
\end{theorem}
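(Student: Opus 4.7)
\smallskip\noindent
\textbf{Proof proposal for Theorem \ref{thm:small_density}.}
The plan is to combine Proposition \ref{prop:krivilevich}, which states
\[
\beta(G) \;\leq\; \frac{\rho(G)}{8} \;-\; \frac{C_4(G)}{12\,\rho(G)},
\]
with a lower bound on $C_4(G)$ valid for all triangle-free graphs, and then to verify the resulting inequality $\beta(G)\leq 1/50$ by elementary algebra. First observe that a sufficient condition for $\beta(G)\leq \frac 1{50}$ via Proposition \ref{prop:krivilevich} is the pointwise estimate
\[
C_4(G) \;\geq\; \frac{3}{2}\,\rho(G)^2 \;-\; \frac{6}{25}\,\rho(G),
\]
which is exactly the bound of Theorem \ref{thm:quadriliterals}(ii) (valid there only under the no-induced-matching assumption).

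For the low-density part $\rho(G)\leq 4/25$, the random half already gives $\beta(G)\leq \rho(G)/8 \leq 1/50$, so there is nothing to do. In the intermediate band $4/25 < \rho(G) \leq \rho_0$, I would plug into Proposition \ref{prop:krivilevich} the triangle-free refinement $C_4(G) \geq 3\rho(G)^4/(1-\rho(G))$ of the Chung--Graham--Wilson inequality \eqref{eq:trivial_bound}. Substituting yields the one-variable inequality
\[
\frac{\rho}{8} \;-\; \frac{\rho^{3}}{4(1-\rho)} \;\leq\; \frac{1}{50},
\]
which after clearing denominators becomes a polynomial inequality in $\rho$. One checks that its relevant boundary, after extraction of the controlling quadratic factor, is precisely $58\rho^2 - 33\rho + 4 \geq 0$, whose smaller root is $\rho_0 = (33-\sqrt{161})/116$.

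The delicate point is that the most naive substitution leaves a small slack at $\rho=\rho_0$; closing this gap requires an extra ingredient. My expectation is that a slight sharpening of the Cauchy--Schwarz step used to prove the trivial $C_4$-bound---one that keeps track of the lower-order correction $\binom{\bar\lambda}{2} = \bar\lambda^2/2 - \bar\lambda/2$ rather than discarding the linear term, and that exploits the triangle-free cancellation $\lambda(u,w)=0$ on edges---produces exactly the refinement needed to push the threshold from the root of the naive cubic up to $\rho_0$.

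The main obstacle, therefore, is not conceptual but accounting: one has to track constants precisely through the chain (Proposition \ref{prop:krivilevich}) $\to$ (refined $C_4$ lower bound) $\to$ (reduction to a quadratic in $\rho$) so that the quadratic that emerges is \emph{exactly} $58\rho^2 - 33\rho + 4$ and not a weaker companion. Once this is done, the implication $\rho\leq\rho_0 \Rightarrow 58\rho^2-33\rho+4\geq 0 \Rightarrow \beta(G)\leq 1/50$ is immediate.
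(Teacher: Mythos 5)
Your approach is genuinely different from the paper's, and it has a real gap that you candidly flag but do not close.

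The paper does not route Theorem \ref{thm:small_density} through Proposition \ref{prop:krivilevich} and a $C_4$ lower bound at all. Instead, it splits on $\Delta(G)$: for $\Delta(G)\geq 1/4$ it uses a new Lemma \ref{lem:density} built from two halves tuned to a single maximum-degree vertex, and for $\Delta(G)\leq 1/4$ it chooses a \emph{specific} edge $(v_1,v_2)$ ($v_1$ of maximum degree, $v_2$ of maximum degree in $N_G(v_1)$), forms three halves $\mu_0,\mu_1,\mu_2$, and combines \eqref{eq:mu_1}, \eqref{eq:mu_2}, \eqref{eq:mu0} with coefficients $\alpha_0,\alpha_1,\alpha_2$ designed to kill the $\rho_b$ term and reduce to a bound $\beta(G)\leq f(\rho,\Delta,e_2)$. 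The quadratic $58\rho^2-33\rho+4$ you correctly identify arises there, but from evaluating $f(\rho,\Delta,e_2)$ on the diagonal $\rho=\Delta=e_2$ and setting $f=\tfrac1{50}$ (note $f(t,t,t)=\tfrac{t(1-2t)}{8(1-2t+2t^2)}$); this is the extremal regime for the paper's multivariate polynomial $Q$.

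The gap in your proposal is concrete. Substituting the trivial bound $C_4\geq 3\rho^4/(1-\rho)$ into Proposition \ref{prop:krivilevich} and clearing denominators yields the cubic condition $50\rho^3+25\rho^2-29\rho+4\geq 0$, whose relevant root is near $0.1725$, strictly below $\rho_0\approx 0.1751$; at $\rho=\rho_0$ the left side is about $-0.04<0$, so the inequality genuinely fails. Moreover that cubic does \emph{not} factor through $58\rho^2-33\rho+4$ (check the $\rho^2$ coefficient of $(\tfrac{25}{29}\rho+1)(58\rho^2-33\rho+4)$: you get $\tfrac{857}{29}\neq 25$), so the ``extraction of the controlling quadratic factor'' is not available here. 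The ``extra ingredient'' you gesture at — a sharpened Cauchy--Schwarz for the $C_4$ bound keeping the $-\bar\lambda/2$ term — is only a hope, not an argument, and it is not obvious it yields exactly the needed $\sim 0.0026$ in $\rho$. If you want to stay inside the Krivelevich-plus-$C_4$ framework, you would need to prove a strictly stronger $C_4$ lower bound than \eqref{eq:trivial_bound} valid for $\rho$ near $0.175$; the bounds of Theorem \ref{thm:quadriliterals} do not help either (part (i) plugged into Proposition \ref{prop:krivilevich} gives exactly $27/1024$ independently of $\rho$, and for $\rho\approx 0.175$ its right-hand side is actually negative). The cleaner route is the paper's: abandon edge-averaging, fix the extremal vertex, and let $\Delta(G)$ and $e_2$ enter as free parameters.
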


Recall that a regular triangle-free graph $G$ is {\em strongly regular} if
$|N_G(v)\cap N_G(w)|$ takes the same value $c$ for all pairs $(v,w)$ of
non-adjacent vertices.

\begin{theorem} \label{thm:strongly_regular}
Conjecture {\rm \ref{conj:erdos}} is true for any triangle-free strongly
regular graph.
\end{theorem}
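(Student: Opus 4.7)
The plan is to invoke Theorem~\ref{thm:small_density} in the low-density regime and to carry out a finite enumeration in the high-density regime.

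If $\rho(G)\leq\rho_0$, Theorem~\ref{thm:small_density} immediately yields $\beta(G)\leq 1/50$. Assume henceforth $\rho(G)>\rho_0$. For a triangle-free strongly regular graph $G$ with parameters $(n,k,0,c)$, the SRG identity $c(n-k-1)=k(k-1)$ gives $\rho(G)=kc/(k^{2}+(c-1)k+c)$, which for each fixed $c$ tends to $0$ as $k\to\infty$. Hence $\rho(G)>\rho_0$ bounds $k$ in terms of $c$, and combining this with the classical SRG feasibility conditions---integer eigenvalues (the only irrational case being the conference graph $C_5$), integer multiplicities, the Krein bounds, and the absolute bound---restricts $(n,k,c)$ to a short explicit list. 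After discarding the trivial bipartite case $K_{k,k}$ (where $\beta(G)=0$), the list consists of exactly five graphs: $C_5$, the Petersen graph, the Clebsch graph $(16,5,0,2)$, the Gewirtz graph $(56,10,0,2)$, the Mesner graph $(77,16,0,4)$, and the Higman--Sims graph $(100,22,0,6)$.

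Each of these cases is then handled individually. For $C_5$, $\beta(C_5)=1/50$ is a direct calculation. For the Petersen graph $\alpha(G)=4=2n/5$, and the independence-number bound from the introduction (giving $\beta\leq\tfrac12\alpha(\tfrac12-\alpha)$ whenever $\alpha\geq 3/8$) yields $\beta\leq 1/50$. For Clebsch, Gewirtz, and the Mesner graph we apply a common ``edge template'': for any edge $uv$ set $W=V(G)\setminus(N_G[u]\cup N_G[v])$; then $|W|=n-2k$, and $G[W]$ is $(k-2c)$-regular because any $w\in W$ has exactly $c$ common neighbours with each of $u,v$ and these two neighbourhoods are disjoint by triangle-freeness. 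Assigning weights $\mu(u)=\mu(v)=1$ and $\mu\equiv(n/2-2)/(n-2k)$ uniformly on $W$---possibly after first discarding a small independent subset of $W$ (whose existence is guaranteed by Shearer's lower bound on the independence number of triangle-free graphs applied to the induced $(k-2c)$-regular subgraph on $W$) so as to keep $\mu\leq 1$---one verifies directly $\beta\leq 1/64$ for Clebsch, $\beta\leq 172/9408$ for Gewirtz, and $\beta<1/50$ for the Mesner graph (the last requiring an $8$-vertex independent trim inside the $8$-regular induced subgraph on $W$). The edge template falls short by a hair for Higman--Sims; we instead invoke the classical fact that the Higman--Sims graph contains an induced copy of the Hoffman--Singleton graph on $50=n/2$ vertices, and taking that subgraph as the half gives $\beta\leq 175/10^{4}<1/50$.

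The main technical obstacles will be (i) rigorously carrying out the enumeration of feasible triangle-free SRG parameter triples with $\rho>\rho_0$ via the integer-multiplicity and Krein constraints, and (ii) the tight numerical verifications for the Mesner and Higman--Sims cases; the Higman--Sims case in particular requires appealing to a specific structural fact (the induced Hoffman--Singleton subgraph) rather than to generic SRG combinatorics.
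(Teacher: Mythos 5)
Your overall strategy matches the paper's: reduce to $\rho(G)>\rho_0$ via Theorem~\ref{thm:small_density}, then enumerate the finitely many feasible triangle-free SRG parameter sets in the high-density regime and treat them individually. The explicit constructions you use differ from the paper's (which takes unions of neighbourhoods of small induced subgraphs, verified in Maple); notably, your Higman--Sims argument via the classical fact that it contains an induced Hoffman--Singleton subgraph on $50=n/2$ vertices (hence $\beta\leq 175/10^4<1/50$) is cleaner than the ad hoc half used in the paper.

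There is, however, a genuine gap in the enumeration. The standard feasibility conditions you invoke (integral eigenvalues, integral multiplicities, Krein bound, absolute bound) do \emph{not} eliminate the parameter triple $(n,k,c)=(324,57,12)$ --- the Krein-boundary set with positive eigenvalue $q=3$. Its density is $\rho=57/324\approx 0.1759>\rho_0$, so it is not absorbed by the low-density case and must appear on your list, yet you silently drop it. The non-existence of a graph with these parameters is a deep, relatively recent result (Gavrilyuk--Makhnev; Kaski--\"Osterg{\aa}rd), not a consequence of classical feasibility; the paper either cites that result or, alternatively, shows that for any regular triangle-free graph $\beta\leq \bigl(\tfrac{2}{3}C_4+\rho^2(1-4\rho)\bigr)/\bigl(8\rho(1-2\rho)^2\bigr)$, which for the strongly regular $q=3,\ c=12$ parameters evaluates to $11/560<1/50$ without assuming existence. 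Your own edge template would also handle these parameters on paper (the uniform weight $16/21<1$ gives $\beta\approx 0.0192$), so the fix is mechanical once the case is noticed, but as written the assertion that the classical conditions restrict $(n,k,c)$ to the six named graphs is incorrect. (A minor slip: you announce ``exactly five graphs'' but then list six.)
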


\begin{theorem} \label{thm:large_alpha}
For any triangle-free graph $G$ with $\alpha(G)\geq 3/8$ we have
$$
\beta(G)\leq \frac 12\alpha(G)\of{\frac 12-\alpha(G)}.
$$
\end{theorem}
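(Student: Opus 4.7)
Let $A$ be a maximum independent set of $G$ with $|A| = \alpha n$; since $\alpha \geq 1/2$ makes the assertion trivial (any $n/2$ vertices of $A$ form an edgeless half), I assume $\alpha < 1/2$ and set $B := V(G) \setminus A$. Triangle-freeness makes $N(v)$ an independent set, and so $d(v) \leq \alpha n$ for every $v \in V(G)$; in particular $d_A(v) + d_B(v) \leq \alpha n$ for each $v \in B$. Maximality of $A$ gives $d_A(v) \geq 1$ for every $v \in B$, and more generally $|N_A(J)| \geq |J|$ for any $J$ independent in $G[B]$, because $J \cup (A \setminus N_A(J))$ is then independent in $G$.

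My candidate halves are the three-valued weightings
\[
\mu(v) = \begin{cases} 1 & v \in A, \\ c & v \in I, \\ 0 & v \in B \setminus I, \end{cases}
\]
parametrised by an independent set $I$ of $G[B]$ with $|I| \geq (1/2-\alpha)n$ and $c := (1/2-\alpha)n/|I| \in (0, 1]$. Since $A$ and $I$ are both independent, only the cross-edges $E(A, I)$ contribute, so
\[
\beta(G, \mu) \;=\; c \cdot \frac{|E(A, I)|}{n^2} \;=\; \frac{1/2-\alpha}{n}\cdot\frac{|E(A, I)|}{|I|}.
\]
Hence the desired inequality $\beta(G, \mu) \leq \alpha(1/2-\alpha)/2$ reduces to finding an independent set $I \subseteq B$ in $G[B]$ with $|I| \geq (1/2-\alpha)n$ and average $A$-degree $|E(A, I)|/|I| \leq \alpha n/2$. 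This reduction is sharp at the Petersen graph (each $v \in B$ has $d_A(v) = 2 = \alpha n/2$) and at blow-ups of $C_5$ (take $I$ to be one of the two ``non-$A$'' classes at distance two from $A$), which is reassuring since those are the known extremal configurations.

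To build $I$ I would exploit that for every $u \in A$ the neighborhood $N(u) \subseteq B$ is independent in $G[B]$ by triangle-freeness. Intersecting with $B_{\mathrm{low}} := \{w \in B : d_A(w) \leq \alpha n/2\}$ enforces the average-degree bound automatically, so it suffices to find $u^* \in A$ with $|N(u^*) \cap B_{\mathrm{low}}| \geq (1/2-\alpha)n$. Double counting gives $\sum_{u \in A} |N(u) \cap B_{\mathrm{low}}| = |E(A, B_{\mathrm{low}})|$, so by pigeonhole the task is $|E(A, B_{\mathrm{low}})| \geq \alpha(1/2-\alpha)n^2$.

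This quantitative inequality for $|E(A, B_{\mathrm{low}})|$ is the main obstacle, and it is exactly where the assumption $\alpha \geq 3/8$ is used. The naive bound $|E(A, B)| \leq \alpha^2 n^2$ (from $d(u) \leq \alpha n$ for $u \in A$) is too weak on its own. I would close the argument with a case analysis on the density $|E(A, B)|/n^2$: when it is small, $B_{\mathrm{low}}$ absorbs most of $B$ and the pigeonhole step works directly; when it is large, $d_A(w) + d_B(w) \leq \alpha n$ forces $G[B]$ to be sparse, and a Caro--Wei-type argument in $G[B]$ supplies a large independent set whose $A$-degrees one controls by a different weighting (possibly shifting weight from $A$ onto $B$ itself). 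The numerical threshold $\alpha = 3/8$ is exactly what makes these two regimes cover the entire parameter range; pushing it lower would require a genuinely new ingredient, consistent with the remark in the introduction that the result has not been extended to an open neighborhood of $2/5$.
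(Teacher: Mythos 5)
Your reduction is clean and worth noting: if you can find an independent set $I\subseteq B:=V(G)\setminus A$ in $G[B]$ with $|I|\geq(1/2-\alpha)n$ and average $A$-degree $|E(A,I)|/|I|\leq \alpha n/2$, then the three-valued weighting you describe does give $\beta(G,\mu)\leq\tfrac12\alpha(\tfrac12-\alpha)$. You then propose to realize $I$ as $N(u^\ast)\cap B_{\mathrm{low}}$ for a pigeonhole-chosen $u^\ast\in A$, which requires $|E(A,B_{\mathrm{low}})|\geq\alpha(1/2-\alpha)n^2$. That quantitative claim is the crux, and it is not proved; moreover the sketch you give for it is directed the wrong way. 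When $|E(A,B)|$ is \emph{small}, $B_{\mathrm{low}}$ does indeed absorb most of $B$, but $|E(A,B_{\mathrm{low}})|\leq|E(A,B)|$ is then also small and the pigeonhole over $A$ yields only a tiny $|N(u^\ast)\cap B_{\mathrm{low}}|$; so the ``small'' case is exactly where your pigeonhole step fails, not where it works. (Concretely, if each $w\in B$ has $d_A(w)$ close to $1$, which is compatible with $A$ being a maximum independent set, then $|E(A,B_{\mathrm{low}})|=O(n)$, far below $\Theta(n^2)$.) The fallback in the ``large'' case is also only a gesture: Caro--Wei gives a large independent set in $G[B]$ but says nothing about its $A$-degrees, and ``a different weighting'' is not specified. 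So as written the argument has a genuine hole both in the main inequality and in the proposed case split to repair it.

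The paper's proof is structurally different and it is worth seeing why. Rather than working with $A$ and $V\setminus A$ directly, it first \emph{greedily enlarges} $A$ to a set $B\supseteq A$ by repeatedly absorbing vertices $v$ with $e_B(v)\leq 1/2-\alpha$; this stops either at $|B|=n/2$ (easy win) or at a set $B$ with three controlled properties: $p_b\in[\alpha,1/2]$, $\rho_b\leq 2(\tfrac12-\alpha)(p_b-\alpha)$, and every external vertex has $e_B(v)>\tfrac12-\alpha$. It then isolates the (necessarily independent, hence small) set $C$ of external vertices with $e_B(v)>p_b/2$, sets aside a residual part $D$ disjoint from $B\cup C$, and splits into two cases according to whether some $v\in B$ has $e_D(v)\geq\tfrac12-p_b$. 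The harder case is finished by randomizing over $v_0\in B$, exploiting concavity in $p_e$, and a second-moment estimate on $e_B(\boldsymbol w)$ for a uniform $\boldsymbol w\in D$, using the two-sided bound $\tfrac12-\alpha\leq e_B(\boldsymbol w)\leq p_b/2$. This greedy pre-processing of $A$ and the second-moment averaging are exactly the ingredients your sketch is missing; without some replacement for them, the reduction to ``find a low-$A$-degree independent set in a single neighborhood $N(u^\ast)$'' does not go through.
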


\begin{corollary}
Conjecture {\rm \ref{conj:erdos}} is true for any triangle-free graph with
$\alpha(G)\geq 2/5$.
\end{corollary}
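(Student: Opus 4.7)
The plan is to deduce the corollary directly from Theorem \ref{thm:large_alpha} by an elementary one-variable calculus argument. Since $2/5 > 3/8$, any triangle-free $G$ with $\alpha(G) \geq 2/5$ satisfies the hypothesis of the theorem, giving
$$
\beta(G) \leq \frac{1}{2}\alpha(G)\of{\frac{1}{2} - \alpha(G)}.
$$
Recall from \eqref{eq:assumption} that we may assume $\alpha(G) \leq 1/2$, since otherwise Conjecture \ref{conj:erdos} is trivial. Hence it suffices to show that the function $f(\alpha) \df \frac{1}{2}\alpha(\frac{1}{2} - \alpha) = \frac{\alpha}{4} - \frac{\alpha^2}{2}$ is bounded by $\frac{1}{50}$ on the interval $[2/5,\, 1/2]$.

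First I would evaluate $f$ at the left endpoint:
$$
f(2/5) = \frac{1}{2}\cdot\frac{2}{5}\cdot\of{\frac{1}{2} - \frac{2}{5}} = \frac{1}{5}\cdot\frac{1}{10} = \frac{1}{50},
$$
so the bound is achieved precisely at $\alpha = 2/5$. Next, since $f'(\alpha) = \frac{1}{4} - \alpha < 0$ for every $\alpha > 1/4$, in particular throughout $[2/5,\, 1/2]$, the function $f$ is strictly decreasing on this interval. Therefore $f(\alpha(G)) \leq f(2/5) = \frac{1}{50}$, which combined with the inequality from Theorem \ref{thm:large_alpha} yields $\beta(G) \leq \frac{1}{50}$, as required.

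There is no real obstacle: the whole content is packed into the choice of the threshold $2/5$ in Theorem \ref{thm:large_alpha}, which was calibrated exactly so that the quadratic $f(\alpha)$ hits $1/50$ at that point and decreases afterwards. The corollary is then a two-line verification.
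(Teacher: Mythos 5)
Your deduction is correct and is exactly the intended route: apply Theorem \ref{thm:large_alpha} (valid since $2/5 > 3/8$), use \eqref{eq:assumption} to restrict to $\alpha(G)\in[2/5,1/2]$, and note that $\frac{1}{2}\alpha\of{\frac{1}{2}-\alpha}$ is decreasing there with value $\frac{1}{50}$ at $\alpha=2/5$. The paper leaves this as an immediate consequence, and your two-line verification supplies precisely the missing arithmetic.
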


\begin{theorem} \label{thm:girth}
Conjecture {\rm \ref{conj:erdos}} is true for any triangle-free graph of
girth $\geq 5$.
\end{theorem}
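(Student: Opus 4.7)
The plan is to combine the sparsity of girth-$\geq 5$ graphs with Theorem \ref{thm:small_density} for the high-$n$ regime and the Corollary of Theorem \ref{thm:large_alpha} for the low-$n$ regime.

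First I would establish an edge-density bound. Since $G$ is triangle-free and $C_4$-free, any two distinct nonadjacent vertices share at most one common neighbor, and adjacent ones share none. Counting paths of length two centered at each vertex gives $\sum_v |N_G(v)|(|N_G(v)|-1) \leq n(n-1) - 2|E(G)|$, after which Cauchy--Schwarz applied to $\sum_v |N_G(v)| = 2|E(G)|$ yields the classical bound $\rho(G) \leq \sqrt{n-1}/n$. A direct numerical check shows that $\sqrt{n-1}/n \leq \rho_0 = (33-\sqrt{161})/116$ precisely when $n \geq 32$, so Theorem \ref{thm:small_density} disposes of all girth-$\geq 5$ graphs on at least $32$ vertices.

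It remains to handle the range $n \leq 31$. Here I would argue that $\alpha(G)\geq 2n/5$ throughout and then invoke the Corollary of Theorem \ref{thm:large_alpha}. The target $\alpha/n \geq 2/5$ is sharp: it is attained by $C_5$ (at $n=5$) and by the Petersen graph (at $n=10$), the two conjectured extremal examples, both of girth $5$. The naive chain $\alpha(G)\geq \Delta(G)\geq \rho(G)$ (in the paper's normalized units) is too weak once $\rho<2/5$, so the girth assumption must be used more carefully. A natural route is to exploit the BFS structure: for girth $\geq 5$, the level-two neighborhoods $N_G(u)\setminus\{v\}$ with $u$ ranging over $N_G(v)$ are pairwise disjoint, so the tree of depth two rooted at a maximum-degree vertex exhausts almost all of $V(G)$ in the relevant regime and allows one to assemble independent sets from portions of $\{v\}\cup L_2(v)$, potentially in combination with Shearer-type bounds specialized to girth $\geq 5$.

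The main obstacle is this small-$n$ step. The analogous bound $\alpha\geq 2n/5$ is already false for general triangle-free graphs (Ramsey-type constructions push $\alpha$ well below $2n/5$), and even among girth-$\geq 5$ graphs there are counterexamples such as the Hoffman--Singleton graph with $\alpha/n=3/10$. Fortunately all known counterexamples live at $n\geq 32$ and are already covered by Theorem \ref{thm:small_density}; the challenge is to prove that nothing analogous occurs for $n\leq 31$, ideally by a uniform structural argument rather than an exhaustive case analysis.
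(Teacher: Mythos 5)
Your reduction for large $n$ is sound and is a genuinely different opening move from the paper's. The Moore-type counting argument for girth-$\geq 5$ graphs correctly gives $\rho(G)\leq \sqrt{n-1}/n$, and the numerical check that this drops below $\rho_0 = (33-\sqrt{161})/116 \approx 0.1751$ exactly at $n=32$ is right, so Theorem~\ref{thm:small_density} does dispose of all girth-$\geq 5$ graphs on $\geq 32$ vertices. This is an efficient way to reduce to a finite problem.

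However, the remaining step is a genuine gap, and you explicitly concede it. You propose to finish by proving $\alpha(G)\geq 2n/5$ for every girth-$\geq 5$ graph on at most $31$ vertices and then invoking the Corollary to Theorem~\ref{thm:large_alpha}, but you give no actual proof of this independence bound --- only a gesture toward BFS layers and Shearer-type estimates, followed by the acknowledgment that ``the challenge is to prove that nothing analogous occurs for $n\leq 31$.'' That is the entire content of the second half of the theorem, and it is left open. Nor is it obvious the claim is even the right target: $\alpha/n=2/5$ is already tight for the Petersen graph, and pushing a structural lower bound of exactly $2/5$ through all girth-$\geq 5$ graphs on up to $31$ vertices (e.g.\ the Robertson graph on 19 vertices) would require a careful argument you have not supplied.

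For comparison, the paper does not attempt to prove an independence-ratio bound at all. It takes a minimal counterexample, lets $v_0$ be a vertex of maximum degree $k$, sets $A = N_G(v_0)$ and $B=V(G)\setminus(\{v_0\}\cup A)$, and exploits the girth hypothesis in the form $|N_G(v)\cap A|\leq 1$ for all $v\in B$, together with the minimality of $G$ applied to $G|_B$. This yields \eqref{eq:first_bound}, which closes the case $k\geq 7$ outright and pins $n$ between the bounds \eqref{eq:lower_on_n} and \eqref{eq:upper_on_n} when $k\leq 6$. The roughly $80$ remaining $(k,n)$ pairs are handled using Ramsey numbers $R(3,u)$ to locate large independent sets inside $B$ (preferentially in the set $C$ of vertices at distance $\geq 3$ from $v_0$), plus the recursive bounds \eqref{eq:gamma_even}--\eqref{eq:gamma_recursive} on the auxiliary function $\gamma$, verified by Maple. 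So the paper also ends in a finite computational sweep; your hope for a uniform structural argument in the small-$n$ regime is, as far as the paper is concerned, not realized either. If you want to salvage your approach, you would need to either prove $\alpha\geq 2n/5$ for girth-$\geq 5$ graphs with $n\leq 31$ (possibly by its own case analysis), or replace that step with a direct half-construction argument akin to the paper's.
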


\section{Proofs} \label{sec:proofs}

In this section we prove all our results. Some of the proofs, particularly in
Sections \ref{sec:flag} and \ref{sec:sparse}, heavily rely on symbolic Maple
computations. The corresponding worksheet, along with some supporting
material, can be found at\\
http://people.cs.uchicago.edu/\~{}razborov/files/halves.zip.

\subsection{Flag-algebraic calculations} \label{sec:flag}

In this section we prove Theorem \ref{thm:quadriliterals}. As we remarked in
Section \ref{sec:prel}, our notation for finite graphs is consistent with
flag algebras hence it is sufficient to prove the inequalities
\begin{eqnarray}
\label {eq:first_inequality}  \frac 32\rho^2 - \frac{81}{256}\rho &\leq& C_4\\
\label{eq:second_inequailty} \frac 32\rho^2 - \frac{6}{25}\rho &\leq& C_4 +2M_4
\end{eqnarray}
($M_4$ is the matching with two edges) in the theory $T_{\text{TF}}$ of
triangle-free graphs and then apply them to the infinite (balanced) blow-up
of $G$.

We do it by a straightforward Cauchy-Schwartz computation in flag algebras.
Since quite a number of those have already appeared in the literature, with
varying degree of informal explanation, we do ours matter-of-factly strictly
adhering to the notation of \cite{flag}.

\medskip
Let us start with \eqref{eq:first_inequality}; for that we need to consider
triangle-free graphs on 8 vertices. We have $\absvalue{\mathcal M_8}=410$ and
$\absvalue{\mathcal F_6^{\sigma_i}}=d_i$, where $d_1=110,\ d_2=81,\ d_3=67,\
d_4= 46$ and the types $\sigma_i$ are shown on Figure \ref{fig:types} (with
the exception of $\sigma_4$, these are the same types employed in
\cite{wheel}).
\begin{figure}[htb]
\begin{center}
\setlength{\unitlength}{0.254mm}
\begin{picture}(372,222)(30,-356)
        \special{color rgb 0 0 0}\allinethickness{0.254mm}\special{sh 0.99}\put(40,-195){\ellipse{4}{4}} 
        \special{color rgb 0 0 0}\allinethickness{0.254mm}\special{sh 0.99}\put(90,-195){\ellipse{4}{4}} 
        \special{color rgb 0 0 0}\put(60,-211){\shortstack{$E$}} 
        \special{color rgb 0 0 0}\allinethickness{0.254mm}\special{sh 0.99}\put(215,-145){\ellipse{4}{4}} 
        \special{color rgb 0 0 0}\allinethickness{0.254mm}\special{sh 0.99}\put(215,-195){\ellipse{4}{4}} 
        \special{color rgb 0 0 0}\allinethickness{0.254mm}\special{sh 0.99}\put(165,-145){\ellipse{4}{4}} 
        \special{color rgb 0 0 0}\allinethickness{0.254mm}\special{sh 0.99}\put(165,-195){\ellipse{4}{4}} 
        \special{color rgb 0 0 0}\allinethickness{0.254mm}\special{sh 0.99}\put(215,-145){\ellipse{4}{4}} 
        \special{color rgb 0 0 0}\allinethickness{0.254mm}\special{sh 0.99}\put(215,-195){\ellipse{4}{4}} 
        \special{color rgb 0 0 0}\put(185,-211){\shortstack{$\sigma_1$}} 
        \special{color rgb 0 0 0}\allinethickness{0.254mm}\special{sh 0.99}\put(290,-145){\ellipse{4}{4}} 
        \special{color rgb 0 0 0}\allinethickness{0.254mm}\special{sh 0.99}\put(290,-195){\ellipse{4}{4}} 
        \special{color rgb 0 0 0}\allinethickness{0.254mm}\special{sh 0.99}\put(340,-145){\ellipse{4}{4}} 
        \special{color rgb 0 0 0}\allinethickness{0.254mm}\special{sh 0.99}\put(340,-195){\ellipse{4}{4}} 
        \special{color rgb 0 0 0}\put(310,-211){\shortstack{$\sigma_2$}} 
        \special{color rgb 0 0 0}\allinethickness{0.254mm}\special{sh 0.99}\put(105,-290){\ellipse{4}{4}} 
        \special{color rgb 0 0 0}\allinethickness{0.254mm}\special{sh 0.99}\put(105,-340){\ellipse{4}{4}} 
        \special{color rgb 0 0 0}\allinethickness{0.254mm}\special{sh 0.99}\put(155,-290){\ellipse{4}{4}} 
        \special{color rgb 0 0 0}\allinethickness{0.254mm}\special{sh 0.99}\put(155,-340){\ellipse{4}{4}} 
        \special{color rgb 0 0 0}\put(125,-356){\shortstack{$\sigma_3$}} 
        \special{color rgb 0 0 0}\allinethickness{0.254mm}\special{sh 0.99}\put(280,-290){\ellipse{4}{4}} 
        \special{color rgb 0 0 0}\allinethickness{0.254mm}\special{sh 0.99}\put(280,-340){\ellipse{4}{4}} 
        \special{color rgb 0 0 0}\allinethickness{0.254mm}\special{sh 0.99}\put(230,-290){\ellipse{4}{4}} 
        \special{color rgb 0 0 0}\allinethickness{0.254mm}\special{sh 0.99}\put(230,-340){\ellipse{4}{4}} 
        \special{color rgb 0 0 0}\allinethickness{0.254mm}\special{sh 0.99}\put(280,-290){\ellipse{4}{4}} 
        \special{color rgb 0 0 0}\allinethickness{0.254mm}\special{sh 0.99}\put(280,-340){\ellipse{4}{4}} 
        \special{color rgb 0 0 0}\put(250,-356){\shortstack{$\sigma_4$}} 
        \special{color rgb 0 0 0}\allinethickness{0.254mm}\path(105,-340)(105,-290) 
        \special{color rgb 0 0 0}\allinethickness{0.254mm}\path(105,-340)(155,-340) 
        \special{color rgb 0 0 0}\allinethickness{0.254mm}\path(280,-340)(230,-340) 
        \special{color rgb 0 0 0}\allinethickness{0.254mm}\path(290,-145)(290,-195) 
        \special{color rgb 0 0 0}\allinethickness{0.254mm}\path(290,-195)(340,-195) 
        \special{color rgb 0 0 0}\put(155,-196){\shortstack{\scriptsize 1}} 
        \special{color rgb 0 0 0}\put(220,-196){\shortstack{\scriptsize 2}} 
        \special{color rgb 0 0 0}\put(155,-146){\shortstack{\scriptsize 3}} 
        \special{color rgb 0 0 0}\put(220,-146){\shortstack{\scriptsize 4}} 
        \special{color rgb 0 0 0}\allinethickness{0.254mm}\special{sh 0.99}\put(215,-145){\ellipse{4}{4}} 
        \special{color rgb 0 0 0}\allinethickness{0.254mm}\special{sh 0.99}\put(215,-195){\ellipse{4}{4}} 
        \special{color rgb 0 0 0}\allinethickness{0.254mm}\special{sh 0.99}\put(165,-145){\ellipse{4}{4}} 
        \special{color rgb 0 0 0}\allinethickness{0.254mm}\special{sh 0.99}\put(165,-195){\ellipse{4}{4}} 
        \special{color rgb 0 0 0}\allinethickness{0.254mm}\special{sh 0.99}\put(215,-145){\ellipse{4}{4}} 
        \special{color rgb 0 0 0}\allinethickness{0.254mm}\special{sh 0.99}\put(215,-195){\ellipse{4}{4}} 
        \special{color rgb 0 0 0}\put(155,-196){\shortstack{\scriptsize 1}} 
        \special{color rgb 0 0 0}\put(220,-196){\shortstack{\scriptsize 2}} 
        \special{color rgb 0 0 0}\put(155,-146){\shortstack{\scriptsize 3}} 
        \special{color rgb 0 0 0}\put(220,-146){\shortstack{\scriptsize 4}} 
        \special{color rgb 0 0 0}\put(30,-196){\shortstack{\scriptsize 1}} 
        \special{color rgb 0 0 0}\put(95,-196){\shortstack{\scriptsize 2}} 
        \special{color rgb 0 0 0}\put(280,-196){\shortstack{\scriptsize 1}} 
        \special{color rgb 0 0 0}\put(345,-196){\shortstack{\scriptsize 2}} 
        \special{color rgb 0 0 0}\put(280,-146){\shortstack{\scriptsize 3}} 
        \special{color rgb 0 0 0}\put(345,-146){\shortstack{\scriptsize 4}} 
        \special{color rgb 0 0 0}\put(220,-341){\shortstack{\scriptsize 1}} 
        \special{color rgb 0 0 0}\put(285,-341){\shortstack{\scriptsize 2}} 
        \special{color rgb 0 0 0}\put(220,-291){\shortstack{\scriptsize 3}} 
        \special{color rgb 0 0 0}\put(285,-291){\shortstack{\scriptsize 4}} 
        \special{color rgb 0 0 0}\put(95,-341){\shortstack{\scriptsize 1}} 
        \special{color rgb 0 0 0}\put(160,-341){\shortstack{\scriptsize 2}} 
        \special{color rgb 0 0 0}\put(95,-291){\shortstack{\scriptsize 3}} 
        \special{color rgb 0 0 0}\put(160,-291){\shortstack{\scriptsize 4}} 
        \special{color rgb 0 0 0}\allinethickness{0.254mm}\path(165,-195)(215,-195) 
        \special{color rgb 0 0 0}\allinethickness{0.254mm}\path(105,-340)(155,-290) 
        \special{color rgb 0 0 0}\allinethickness{0.254mm}\path(230,-290)(280,-290) 
        \special{color rgb 0 0 0}\allinethickness{0.254mm}\path(40,-195)(90,-195) 
        \special{color rgb 0 0 0}\allinethickness{0.254mm}\path(230,-290)(280,-340) 
        \special{color rgb 0 0 0}\allinethickness{0.254mm}\path(280,-290)(230,-340) 
        \special{color rgb 0 0 0} 
\end{picture}
\caption{\label{fig:types} Types.}
\end{center}
\end{figure}
We enumerate flags in $\mathcal F_6^{\sigma_i}$ in a rather arbitrary order
as $\mathcal F_6^{\sigma_i} = \{F_1^{\sigma_i},\ldots, F_{d_i}^{\sigma_i}\}$
and exhibit PSD matrices $Q_i$ of size $d_i\times d_i$ with rational
coefficients such that
\begin{equation} \label{eq:clebsch}
\sum_{i=1}^4 \sum_{j_1,j_2=1}^{d_i} \eval{Q_i(j_1,j_2)F_{j_1}^{\sigma_i}
F_{j_2}^{\sigma_i}}{\sigma_i} \ll_8 C_4-\frac 32\rho^2+\frac{81}{256}\rho,
\end{equation}
where $\ll_8$ means coefficient-wise comparison after expressing both sides
of this inequality as linear combinations of the elements of $\mathcal M_8$.

The only further remark we want to make here is that the matrices $Q_i$ are
degenerate and their co-ranks $d_i-\text{rk}(Q_i)$ are equal to 2,2,5,4,
respectively. This reflects the fact (and makes an excellent sanity check for
our calculations) that the Clebsch graph $G_{\text{Clebsch}}$ is an extremal
configuration for the inequality \eqref{eq:clebsch}. Hence every strict
homomorphism $\sigma_i\to G_{\text{Clebsch}}$ gives rise to an element in the
kernel of $Q_i$. The actual computation is deferred to
http://people.cs.uchicago.edu/\~{}razborov/files/halves.zip.

\smallskip
The inequality \eqref{eq:second_inequailty} is proved similarly, but this
time we need only graphs on 6 vertices; on the other hand, instead of
$\sigma_3$ we need the type $E$. We have $\absvalue{\mathcal M_6}=38$,
$|\mathcal F_6^{\sigma_i}|=d_i$, where $d_1=12,\ d_2=10,\ d_4 =7$, and also
$|\mathcal M_4^E|=10$. The computation has the form
$$
\sum_{j_1,j_2=1}^{10}\eval{R_E(j_1,j_2)F_{j_1}^{E}
F_{j_2}^{E}}{E} + \sum_{i\in \{1,2,4\}} \sum_{j_1,j_2=1}^{d_i} \eval{R_i(j_1,j_2)F_{j_1}^{\sigma_i}
F_{j_2}^{\sigma_i}}{\sigma_i} \ll_6 C_4+2M_4-\frac 32\rho^2+\frac{6}{25}\rho.
$$

The coefficient 2 in front of $M_4$ is rather arbitrary, we did no attempt to
optimize on it. As this inequality is tight on $C_5$, matrices
$R_E,R_1,R_2,R_4$ also must be degenerate and indeed they have co-ranks
1,1,1,3, respectively.

\subsection{Absolute lower bounds on $\beta(G)$} \label{sec:absolute}

In this section we establish Theorems \ref{thm:improved_bounds} and
\ref{thm:no_matchings}. As was already mentioned, they immediately follow
from Theorem \ref{thm:quadriliterals} and Proposition \ref{prop:krivilevich}
so it only remains to prove the latter. This is simply a part of
Krivilevich's argument \cite{Kri}, slightly re-phrased, but we include it
here for the sake of completeness.

\smallskip
Let us start with considering an individual edge $(v_1,v_2)\in E(G)$. Denote
$A_i\df N_G(v_i)$, and let $e_i\df e(v_i) (=p_{a_i})$; recall that $e_i\leq
1/2$ by \eqref{eq:assumption}. Let
$$
p_i\df \frac{1/2-e_i}{1-e_1-e_2}
$$
so that $p_1+p_2=1$, and let $B\df V(G)\setminus (A_1\cup A_2)$. For $i=1,2$
define the half $\mu_i$ by
$$
\mu_i(v)\df \begin{cases}
1, & \text{if}\ v\in A_i\\
p_i, & \text{if}\ v\in B\\
0, & \text{if}\ v\in A_{3-i}.
\end{cases}
$$
Then
\begin{eqnarray}
\label{eq:mu_1} 2\beta(G) &\leq& 2\beta(G,\mu_1) =p_1\rho_{a_1,b} +p_1^2\rho_b,\\
\label{eq:mu_2} 2\beta(G) &\leq& 2\beta(G,\mu_2) =p_2\rho_{a_2,b} +p_2^2\rho_b.
\end{eqnarray}

Multiplying the $i$th inequality here by $p_{3-i}$ and adding them together,
we get
$$
2\beta(G) \leq p_1p_2(\rho_{a_1b} + \rho_{a_2b} +\rho_{b}) = p_1p_2(\rho-\rho_{a_1a_2})
\leq \frac 14(\rho- C_4^E(v_1,v_2)),
$$
where we denoted $\rho_{a_1a_2}$ by $C_4^E(v_1,v_2)$ to stress that this is
the contribution of $(v_1,v_2)$ to $C_4(G)$. Finally, averaging this over all
edges, we get
$$
2\rho\beta(G) \leq \frac 14\eval{\rho-C^4_E}{E} = \frac 14(\rho^2 - \frac 23C_4)
$$
that is precisely Proposition \ref{prop:krivilevich}.

\subsection{Sparse graphs} \label{sec:sparse}

In this section we prove Theorem \ref{thm:small_density}. As in the previous
work \cite{KeS2}, the analysis splits into two cases: $\Delta(G)\geq 1/4$ and
$\Delta(G)\leq 1/4$.

The first case is taken care of by the following variant of Proposition
\ref{prop:krivilevich}:

\begin{lemma} \label{lem:density}
$$
\beta(G) \leq \frac{\rho(G)(1-2\Delta(G))}{8(1-\Delta(G))^2}.
$$
\end{lemma}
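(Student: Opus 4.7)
The plan is to choose a maximum-degree vertex $v$, consider the partition $V(G) = \{v\} \cup A \cup B$ with $A := N_G(v)$ and $B := V(G) \setminus (A \cup \{v\})$, and then exhibit \emph{two} different halves supported on this partition. The right convex combination of the two resulting upper bounds on $\beta(G)$ should eliminate the cross-edge density $\rho_{ab}$ and leave only a multiple of $\rho$.

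So I would let $v$ attain the maximum degree, $e(v) = \Delta := \Delta(G)$, so that $A$ is independent by triangle-freeness. Set $c_1 := (1-2\Delta)/(2(1-\Delta))$ and $c_2 := 1/(2(1-\Delta))$, both nonnegative by \eqref{eq:assumption} and satisfying $c_1 + c_2 = 1$. The two halves will be
\[
\mu_1(w) := \begin{cases} 1, & w \in A, \\ c_1, & w \in B \cup \{v\}, \end{cases} \qquad \mu_2(w) := \begin{cases} 0, & w \in A, \\ c_2, & w \in B \cup \{v\}, \end{cases}
\]
and $|A| = \Delta n$, $|B| = (1-\Delta) n - 1$ together confirm $\sum_w \mu_i(w) = n/2$.

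Next I will compute $\beta(G,\mu_i)$. Since $A$ is independent and every neighbour of $v$ lies in $A$, only edges in $E(B)$, $E(A,B)$ and $E(\{v\},A)$ contribute, giving
\[
\beta(G,\mu_1) = \frac{c_1 \Delta}{n} + \frac{c_1 \rho_{ab}}{2} + c_1^2 \rho_b, \qquad \beta(G,\mu_2) = c_2^2 \rho_b.
\]
Both values upper-bound $\beta(G)$, so the key step is to take the specific convex combination $c_2\beta(G,\mu_1) + c_1\beta(G,\mu_2)$: using $c_1^2 c_2 + c_1 c_2^2 = c_1 c_2 (c_1+c_2) = c_1 c_2$ together with the edge-count identity $\rho = 2\Delta/n + \rho_{ab} + 2\rho_b$, the $\Delta/n$-corrections cancel exactly and the whole expression collapses to $c_1 c_2 \rho / 2 = \rho(1-2\Delta)/(8(1-\Delta)^2)$, which is precisely the claim.

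The main obstacle is conceptual rather than computational: a single-half argument using either $\mu_1$ or $\mu_2$ alone yields only the weaker bound $\beta(G) \leq \rho/(8(1-\Delta)^2)$, missing the crucial factor $(1-2\Delta)$. One has to combine the two halves, and the specific choice $c_1+c_2=1$ together with the combination weights $c_2,c_1$ are dictated precisely by the requirement that in the combined bound the coefficients of $\rho_{ab}/2$ and of $\rho_b$ become proportional, so that the two contributions collapse into a single multiple of $\rho$.
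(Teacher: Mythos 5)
Your proof is correct and follows essentially the same route as the paper: pick a maximum-degree vertex, consider the two halves $\mu_1$ (all of $A$ plus a uniform fraction $c_1$ of the rest) and $\mu_2$ (a uniform fraction $c_2$ of the complement of $A$), and take the convex combination $c_2\beta(G,\mu_1)+c_1\beta(G,\mu_2)$ to equalize the coefficients of $\rho_{ab}$ and $\rho_b$ and collapse to a multiple of $\rho$. The only cosmetic difference is that you keep $v$ as its own part while the paper folds it into $B$ (so the $\Delta$-term is absorbed into $\rho_{ab}$), but the resulting algebra is identical; your identification of $c_1,c_2$ with $c_1+c_2=1$ and of the combination weights is exactly the calculation the paper performs.
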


\begin{proof}
Pick $v\in V(G)$ with $e(v) = \Delta\ (\df \Delta(G))$, and let $A\df N_G(v)$
(so that $p_a=\Delta$) and $B=N_G(v)\setminus A$. Construct the following
halves $\mu_0$ and $\mu_1$:
\begin{eqnarray*}
\mu_0(w) &\df& \begin{cases} 0,\ \text{if}\ w\in A\\ \frac 1{2(1-\Delta)},\
\text{if}\ w\in B  \end{cases}\\
\mu_1(w) &\df& \begin{cases} 1,\ \text{if}\ w\in A\\ \frac{1/2-\Delta}{1-\Delta},\
\text{if}\ w\in B  \end{cases}
\end{eqnarray*}

Then
\begin{eqnarray}
\label{eq:s0} 2\beta(G) \leq 2\beta(G,\mu_0) &=& \frac{\rho_b}{4(1-\Delta)^2}\\
\label{eq:s1} 2\beta(G) \leq 2\beta(G,\mu_1) &=& \frac{1/2-\Delta}{1-\Delta}\rho_{ab}
+ \of{\frac{1/2-\Delta}{1-\Delta}}^2\rho_b.
\end{eqnarray}
Multiplying \eqref{eq:s1} by $1-2\Delta$ and adding it to \eqref{eq:s0}, we
get
$$
4(1-\Delta)\beta(G) \leq \frac{1-2\Delta}{2(1-\Delta)}(\rho_{ab}+\rho_b) =
\frac{1-2\Delta}{2(1-\Delta)}\rho(G).
$$
\end{proof}

Now, the function $\frac{(1-2\Delta)}{8(1-\Delta^2)}$ is decreasing for
$\Delta\in [1/4,1/2]$, hence $\Delta(G)\geq 1/4$ implies $\beta(G)\leq
\frac{\rho(G)}{9}$ and then Theorem \ref{thm:small_density} follows since
$\rho_0\leq \frac 9{50}$.

\smallskip
The case $\Delta(G)\leq 1/4$ is more difficult. As in the proof of
Proposition \ref{prop:krivilevich}, let us first consider an individual edge
$(v_1,v_2)\in E(G)$ (but this time we will not randomize over this choice but
will pick it up in a way to be specified later). We will re-use the notation
$e_i,A_i,B,p_i$ from that proof so that we still have the bounds
\eqref{eq:mu_1}, \eqref{eq:mu_2}. But now the condition $\Delta(G)\leq 1/4$
allows us to form one more half
$$
\mu_0(G) \df \begin{cases} 1,\ \text{if}\ v\in A_1\cup A_2\\ q, \text{if}\
v\in B,
\end{cases}
$$
where
$$
p_0\df \frac{1/2-e_1-e_2}{1-e_1-e_2}.
$$
This leads to the extra bound
\begin{equation} \label{eq:mu0}
2\beta(G) \leq \rho_{a_1a_2} + p_0(\rho_{a_1b} + \rho_{a_2b}) + p_0^2\rho_b.
\end{equation}

We are now looking for non-negative coefficients $\alpha_0,\alpha_1,\alpha_2$
such that multiplying by them \eqref{eq:mu0}, \eqref{eq:mu_1} and
\eqref{eq:mu_2}, respectively, and adding up the results, we will equalize
the coefficients in front of $\rho_{a_1b}, \rho_{a_1a_2}$, as well as
$\rho_{a_2b}, \rho_b$. For that purpose we set
\begin{eqnarray*}
\alpha_0 &\df& (1-2e_1)^2(1-2e_2)\\
\alpha_1 &\df& (1-2e_1)(1-2e_2)\\
\alpha_2 &\df& 2(1-2e_1)e_2.
\end{eqnarray*}
Then (see the Maple worksheet)
\begin{eqnarray*}
&&4(1-2e_1)(1-e_1-e_2+2e_1e_2)\beta(G)\leq \alpha_0(\rho_{a_1a_2}+\rho_{a_1b}) +
\gamma(\rho_{a_2b}+\rho_b)\\ && \hspace{\parindent}  = (\alpha_0-\gamma)(\rho_{a_1a_2}+\rho_{a_1b}) +
\gamma(\rho_{a_1a_2}+\rho_{a_1b} +\rho_{a_2b}+\rho_b )\\ && \hspace{\parindent} = (\alpha_0-\gamma)(\rho_{a_1a_2}+\rho_{a_1b})
+ \gamma\rho,
\end{eqnarray*}
where
$$
\gamma\df \frac{(1-2e_1)(1-2e_2)(1-4e_1+4e_1^2+4e_1e_2)}{2(1-e_1-e_2)}.
$$
Note for the record that
$$
\alpha_0-\gamma = \frac{(1-2e_1)(1-2e_2)(1-2e_1-2e_2)}{2(1-e_1-e_2)} \geq 0
$$
since $e_1,e_2\leq \Delta(G)\leq 1/4$. Hence we need an upper bound on
$\rho_{a_1a_2}+\rho_{a_1b}$.

\smallskip
For that purpose we now specify $v_1,v_2$. The vertex $v$ is chosen as the
vertex of the maximum degree so that $e_1=\Delta$. We choose $v_2$ to have
maximum degree among all vertices in $N_G(v_1)$. The latter choice gives us
the estimate $\rho_{a_1a_2} + \rho_{a_1b}\leq 2\Delta e_2$ since
$\rho_{a_1a_2}+\rho_{a_1b}$ is simply the overall density of edges incident
to $A_1$. Putting all this together, we arrive at the estimate
$$
\beta(G) \leq f(\rho,\Delta,e_2) \df \frac{(1-2e_2)(4\Delta^2\rho-4\Delta^2e_2+
4\Delta\rho e_2-4\Delta e_2^2-4\Delta\rho +2\Delta e_2+\rho)}{8(1+2\Delta
e_2-\Delta -e_2)(1-\Delta-e_2)}.
$$
Let us also remind that we have the constraints
$$
0\leq \rho, e_2 \leq \Delta \leq 1/4.
$$

This optimization problem is a bit nasty to be fully analyzed, i.e. give an
analytical estimate on $\beta(G)$ in terms of $\rho(G)$. Instead, we compute
$$
\frac 1{50} - f(\rho,\Delta,e_2) = \frac{Q(\rho,\Delta,e_2)}{200(1-\Delta-e_2)
(1-\Delta-e_2+2\Delta(e_2))},
$$
where $Q$ is a polynomial. Our goal is to show that $\rho\leq \rho_0$ implies
$Q(\rho,\Delta,e_2)\geq 0$.

We note that $Q(\rho_0,\rho_0,\rho_0)=0$ and that individual degrees of $Q$
in $\rho,\Delta,e_2$ are 1, 2 and 3, respectively.

We first compute
$$
\frac{\partial Q}{\partial \rho} = -25(1-2e_2)(4\Delta^2+4\Delta e_2-4\Delta+1)
\leq  -25(1-2e_2)(1-2\Delta)^2 \leq 0.
$$
Hence it is sufficient to prove that
$$
Q_1(\Delta,e_2) \df Q(\min(\rho_0,\Delta),\Delta,e_2) \geq 0.
$$
$Q_1$ is no longer smooth in $\Delta$ but it is still a cubic polynomial in
$e_2$. We consider two cases: $e_2\leq\rho_0$ and $e_2\geq\rho_0$.

If $e_2\leq\rho_0$ then we consider Taylor's coefficients at $e_2=\rho_0$:
$$
\left.\frac 1{r!}\frac{\partial^r Q_1(\Delta,e_2)}{(\partial
e_2)^r}\right|_{e_2=\rho_0}\ (r=0..3),
$$
 and it turns out (see the Maple worksheet)
that they are non-negative for even $r$ and negative for odd $r$. The
required inequality $Q_1(\Delta,e_2)\geq 0$ follows.

In the second case $e_2\geq \rho_0$ we also have $\Delta\geq \rho_0$ and
hence $Q_1(\Delta,e_2)=Q(\rho_0,\Delta, e_2)$ is a quadratic polynomial in
$\Delta\in [e_2,1/4]$. It should be noted that it can be either convex or
concave. But in either case the required inequality $Q_1(\Delta,e_2)\geq 0$
follows from $Q_1(e_2,e_2)\geq 0$, $Q_1(1/4,e_2)\geq 0$ and $\frac{\partial
Q_1}{\partial \Delta}|_{\Delta=e_2}\geq 0$.

\subsection{Strongly regular case} \label{sec:strongly_regular}
In this section we prove Theorem \ref{thm:strongly_regular}. For a brief
background on triangle-free strongly regular (TFSR in what follows) graphs we
follow \cite{Big2}.

\smallskip
Except for the complete bipartite graphs $K_{n,n}$ (for which Erd\H{o}s's
conjecture is vacuously true), there are seven known examples of TFSR graphs;
the cycle $C_5$, the Petersen graph and the Clebsch graph being the smallest.
The obvious parameters of a TFSR graph $G$ are $n$, $k$ (the degree of a
vertex) and $c$ (the number of common neighbours of a pair of non-adjacent
vertices). They are actually related as
$$
n=1+\frac kc(k-1+c).
$$

From now on we assume that $G$ is different from $K_{n,n}$ {\em and} that it
is different from $C_5$. Then the quantity $s\df \sqrt{c^2+4(k-c)}$ is an
integer, and the only positive eigenvalue of the adjacency matrix different
from $k$ is given by $q\df \frac{s-c}2.$ It is also an integer such that
\begin{equation} \label{eq:bound_on_c}
1\leq c\leq q(q+1).
\end{equation}
Furthermore, we have
$$
k=(q+1)c+q^2,
$$
hence $k$ and $n$ are rational functions in $c$ and $q$. In particular, we
compute
$$
\rho(G) = \frac kn = \frac{c(qc+q^2+c)}{(qc+q^2+2c+q)(qc+q^2+ c-q)}\df Q(q,c).
$$
Let us now analyze this expression.

Firstly,
$$
\frac{\partial Q}{\partial c} = \frac{q(q+1)(c^2q^2+2cq^3+q^4+c^2q-q^3-c^2-2qc)}
{(qc+q^2+2c+q)^2(qc+q^2+c-q)^2} \geq 0,
$$
hence $Q$ is increasing in $c$.

Next, let
$$
Q_1(q) \df Q(q,q(q+1)) = \frac{q^2+3q+1}{q(q+3)^2}
$$
(this corresponds to so-called {\em Krein graphs}). Then
$$
Q_1(q)' = -\frac{q^3+3q^3+3q+3}{q^2(q+3)^3}<0,
$$
hence $Q_1(q)$ is decreasing. As $Q_1(4)=\frac{29}{196}<\rho_0$, the proof of
Theorem \ref{thm:strongly_regular} boils down to the three cases $q=1,2,3$:
all others are taken care of by Theorem \ref{thm:small_density}.

\medskip
When $q=1$, we have either the Petersen graph ($c=1$) or the Clebsch graph
($c=2$). Conjecture \ref{conj:erdos} for the Clebsch graph is verified by the
half $(N_G(u)\cup N_G(v)) \setminus \{u,v\}$, where $(u,v)$ is an arbitrary
edge.

\smallskip
When $q=2$, we have $Q(2,1)=\frac{7}{50}<\rho_0$ (this is the
Hoffman-Singleton graph) hence it is sufficient to consider the cases $2\leq
c\leq 6$. Well-known ``arithmetic conditions'' rule out $c\in \{3,5\}$
\cite[Table 1]{Big2}, and the three other cases correspond precisely to the
remaining known TFSR graphs: Gewirtz, M22 and Higman-Sims (they are unique
for their values of $c,q$ \cite{Gew,Brou,Gew2}).

For the Gewirtz graph, we pick up four vertices $v_1,v_2,v_3,v_4$ spanning an
induced matching with two edges and consider the half (see the Maple
worksheet) $\bigcup_{i=1}^4 N_G(u_i)\setminus \{u_1,u_2,u_3,u_4\}$. It spans
51 edges which proves $\beta(G)\leq 0.017$.

When $G$ is the $M_{22}$ graph, we similarly let $A\df \bigcup_{i=1}^3
N_G(u_i)\setminus \{u_1,u_2,u_3\}$, where $(u_1,u_2)\in E(G)$ and $u_3\not\in
N_G(u_1)\cup N_G(u_2)$. Then $|A|=38$ and $|E(A)|=109$. Moreover, there
exists a vertex $v\not\in A$ such that $|N_G(v)\cap A|=9$. Adding to $A$ half
of the vertex $v$, we get a half witnessing $\beta(G)\leq 0.0192$.

For the Higman-Sims graph we present an ad hoc half achieving $\beta(G)\leq
\frac 1{50}-10^{-4}$. It was found by a simple optimization program
remarkably suggesting that this bound is actually tight. If it is true (we
did not attempt to verify the claim with a rigorous argument) then the
Higman-Sims graph comes very close to the bound in Erd\H{o}s's conjecture.

\smallskip
Finally, when $q=3$ we have $Q(3,11)=\frac{583}{3350}<\rho_0$. Hence the only
case to consider is $q=3,c=12$ i.e. a hypothetical 57-regular Krein graph on
324 vertices. A simple solution is to note that such a graph is known not to
exist \cite{GaM,KaOs}. Let us, however, sketch another argument due to
Grzesik and Volec (unpublished) that in our opinion is more instructive and
may be of independent interest.

As we already noticed, in the bound \eqref{eq:mu0} the quantity $\rho_b$ can
be eliminated via the identity
$\rho=\rho_{a_1a_2}+\rho_{a_1b}+\rho_{a_2b}+\rho_b$. If $G$ is also known to
be regular, then $p_0=\frac{1/2-2\rho}{1-2\rho}$ and $\rho_{a_ib}$ can be
also eliminated using $\rho_{a_ib}+\rho_{a_1a_2}=2\rho^2$. Plugging all this
into \eqref{eq:mu0} and averaging over all choices of the edge $(v_1,v_2)$,
as in Section \ref{sec:absolute}, we arrive at the bound
\begin{equation} \label{eq:regular}
\beta(G) \leq \frac{\frac 23C_4+\rho^2(1-4\rho)}{8\rho(1-2\rho)^2}
\end{equation}
that holds for {\em any regular} (triangle-free) graph $G$.

Now, if $G$ is also strongly regular then $C_4(G)$ can be easily calculated
as
$$
C_4(G) = \frac 3{n^3} (k^2+c^2(n-k-1))
$$
(recall from Section \ref{sec:prel} that $C_4(G)$ counts degenerated cycles
as well!) Substituting this into \eqref{eq:regular}, we get
$$
\beta(G) \leq \frac{c(cq+q^2-c)}{8q(q+1)(c+q)(c+q-1)}.
$$
In particular, when $q=3, c=12$ we have $\beta(G)\leq \frac{11}{560}$.

\subsection{Graphs with large independence number}
\label{sec:large_independence}

In this section we prove Theorem \ref{thm:large_alpha}; as we noted in the
introduction, for $\alpha\geq 2/5$ it generalizes several previously known
results.

It will be convenient to assume that $n$ is even: this can be always achieved
by replacing each vertex with two identical twins. Let $A\subseteq V(G)$ be
an independent set with $p_a=\alpha\geq 3/8$. We build a larger set
$B\supseteq A$ by recursively adding to it vertices that bring with them only
a few edges. More exactly, apply the following simple algorithm:
\begin{figure}[h]
\begin{center}\fbox{\begin{minipage}{100cm}\begin{tabbing}
$B:=A$\\
{\bf while}\ $|B|<n/2$\ {\bf and}\ $\exists v\not\in B\of{e_B(v)\leq \frac
12-\alpha}$\\
{\bf do} $B:=B\cup \{v\}$.
\end{tabbing}\end{minipage}}\end{center}\end{figure}

If this algorithm terminates since $B$ reaches size $n/2$ then $\beta(G,B)
\leq \of{\frac 12-\alpha}^2$ which is $\leq \frac 12\alpha\of{\frac 12-\alpha}$
since $\alpha\geq \frac 38>\frac 13$ and we are done. Hence we can assume
w.l.o.g. that the algorithm stops when the required vertex $v$ no longer exists. Thus,
we now have a set $B$ such that:
\begin{equation} \label{eq:bounds_on_b}
\begin{cases}
p_b\in [\alpha,1/2]\\
\rho_b \leq 2\of{\frac 12-\alpha}(p_b-\alpha)\\
\forall v\not\in B \of{e_B(v)>\frac 12-\alpha}.
\end{cases}
\end{equation}

\smallskip
Let
$$
C\df \set{v\not\in B}{e_B(v)> \frac{p_b}{2}}.
$$
Then $C$ is independent (as any two vertices of $C$ have a common neighbor in
$B$). Hence $p_c\leq \alpha$ from the definition of $\alpha(G)$. This allows
us to choose a set of vertices $D$ disjoint from both $B$ and $C$ and such that
$p_d=1-\alpha-p_b$. We now consider two cases, depending on whether there
exists a vertex in $B$ that has many neighbors in $D$ or not.

\smallskip\noindent
{\bf Case 1. There exists $v\in B$ such that $e_D(v)\geq \frac 12-p_b$.}

Pick up arbitrarily $E\subseteq N_G(v)\cap D$ with $p_e=\frac 12-p_b$; note that
$E\subseteq N_G(v)$ is independent. Also, for every $v\in E$ we have $e_B(v)\leq
\frac{p_b}2$ since $E\subseteq D$. Then we have (note the absence of the
coefficient 2 in the last term!)
$$
2\beta(G, B\cup E) \leq 2\of{\frac 12 - \alpha}(p_b-\alpha) +p_b\of{\frac
12-p_b}.
$$
The right-hand side is a concave quadratic function in $p_b$, with maximum at
$p_b=\frac 34-\alpha$ which is $\leq\alpha$ since we assumed $\alpha\geq 3/8$.
Hence, since $p_b\geq\alpha$ we can plug in $p_b:=\alpha$ and this completes
the analysis of Case 1.

\smallskip\noindent
{\bf Case 2. For any $v\in B$ we have $e_D(v)\leq \frac 12-p_b$.}

This case is slightly more elaborate. Let us first fix an individual $v_0\in B$
(we will later average over this choice). Let $E\df N_G(v_0)\cap D$ (so that
$p_e\leq \frac 12-p_b$) and $F\df
D\setminus N_G(v_0)$; thus, $D= E\stackrel. \cup F$ with $E$ independent. Consider
the half
$$
\mu(v) \df \begin{cases} 1& \text{if}\ v\in B\cup E\\
p& \text{if}\ v\in F\\
0& \text{in all other cases,}
\end{cases}
$$
where
$$
p = \frac{1/2-p_b-p_e}{p_f}.
$$
Then
$$
2\beta(G,\mu) =\rho_b+\rho_{be}+p\rho_{bf}+p\rho_{ef}+p^2\rho_f.
$$

The bound on $\rho_b$ is given by \eqref{eq:bounds_on_b}, and we have $\rho_{bf}
\leq p_bp_f$; the coefficient 2 is absent for the same reasons as above.
For $\rho_{ef}$ we use the trivial bound $\rho_{ef}\leq 2p_ep_f$
and, finally $\rho_f\leq \frac{p_f^2}{2}$ simply because $G$ is triangle-free.
Plugging all this into the above bound (we leave $\rho_{be}$ alone for the time
being) we get
\begin{equation} \label{eq:bound_on_beta}
\begin{cases} 2\beta(G,\mu) & \leq 2\of{\frac 12-\alpha}(p_b-\alpha) + \rho_{be}
+ p_b\of{\frac 12-p_b-p_e}
\\ & \hspace{\parindent} +2p_e\of{\frac 12-p_b-p_e} + \frac 12\of{\frac 12 -p_b-p_e}^2 \\ & =
2\of{\frac 12-\alpha}(p_b-\alpha) + \frac 12 \of{\frac 12-p_b-p_e} \of{\frac
12+p_b+3p_e}\\ & \hspace{\parindent} + \rho_{be}.
\end{cases}
\end{equation}
In this bound, $p_e$ and $\rho_{be}$ are the only quantities that depend on the
choice of $v_0\in B$, and we now randomize over all such choices.

The bound \eqref{eq:bound_on_beta} is concave in $p_e$ hence we may simply
replace $p_e$ with its expected value $\frac{\rho_{bd}}{2p_b}$.

As for $\rho_{be}$, pick $\rn w\in_R D$ uniformly at random; then by a standard
double counting we see that
$$
\expect{\rho_{be}} = \frac{2p_d}{p_b} \expect{e_B(\rn w)^2}.
$$
But we also know that
$$
\frac 12-\alpha \leq e_B(\rn w) \leq \frac{p_b}{2},
$$
where the first inequality comes from \eqref{eq:bounds_on_b} while
the second follows from $D\cap C=\emptyset$. Moreover,
$$
\expect{e_B(\rn w)} = \frac{\rho_{bd}}{2p_d}.
$$
Estimating the second moment in a standard way, we get
$$
\expect{e_B(\rn w)^2} \leq \frac{\rho_{bd}}{2p_d}\of{\frac
12-\alpha+\frac{p_b}{2}} - \frac{p_b}2\of{\frac 12-\alpha}.
$$
Finally, plugging all our findings into \eqref{eq:bounds_on_b}, we get
\begin{eqnarray*}
\beta &\leq& Q(\alpha,p_b,\rho_{bd})\\ &\df&  \frac{8\alpha^2p_b^2-24\alpha p_b^3 - 4p_b^4+
4\alpha p_b^2 - 8\alpha p_b\rho_{bd}+12p_b^3-4p_b^2 \rho_{bd} -3p_b^2
+6p_b\rho_{bd} -3\rho_{bd}^2}{16p_b^2}
\end{eqnarray*}
(see the Maple worksheet).

$Q$ is quadratic concave in $\rho_{bd}$ and, as before, $\rho_{bd}\leq
p_bp_d= p_b(1-\alpha-p_b)$ since $D\cap C=\emptyset$. Moreover,
$$
\left.\frac{\partial Q}{\partial \rho_{bd}}\right|_{\rho_{bd}=p_bp_d} =
\frac{p_b-\alpha}{8p_b}\geq 0.
$$
Hence
$$
Q(\alpha,p_b,\rho_{bd}) \leq Q(\alpha,p_b,p_b(1-\alpha-p_b)) =
\frac{13}{16}\alpha^2 - \frac 98\alpha p_b-\frac 3{16}p_b^2 -
\frac 14\alpha +\frac 12p_b \df Q_1(\alpha,p_b).
$$
Finally, $Q_1$ is quadratic concave in $p_b$ and $\left.\frac{\partial Q_1}{\partial
p_b}\right|_{p_b=\alpha} = \frac{1-3\alpha}2<0$ (as $\alpha\geq \frac 38$). Since $p_b\geq \alpha$,
we get $Q_1(\alpha,p_b)\leq Q_1(\alpha,\alpha) =\frac{\alpha}{2}\of{\frac 12-\alpha}$.
This completes the proof.

\subsection{Graphs of girth $\geq 5$}

In this section we prove Theorem \ref{thm:girth}, and for this particular proof
we resort to absolute sizes of the sets involved rather than their densities.
The reason is that the girth assumption does not survive blowing up a graph, and
this makes the density-based language unnatural.

So we fix a triangle-free graph $G$ with $g(G)\geq 5,\ |V(G)|=n$, and let
$v_0\in V(G)$ be a vertex of the maximum degree $k$. We may assume that $k\leq
\frac{n-1}{2}$ (otherwise the result is trivial) and also that $G$ is a {\em
minimal} counterexample to Erd\H{o}s's conjecture, that is $\beta(G^\ast)\leq
\frac 1{50}$ for any proper induced subgraph $G^\ast$ of $G$. We let
$$
A\df N_G(v_0),\ \ B\df V(G)\setminus (\{v_0\}\cup A).
$$
Then $g(G)\geq 5$ implies
\begin{equation} \label{eq:edges_ab}
\forall v\in B (|N_G(v)\cap A|\leq 1).
\end{equation}

We now apply the minimality assumption to the induced subgraph $G|_B$. This
gives us a function $\nu\function B{[0,1]}$ such that
\begin{equation} \label{eq:nu_bound}
\begin{cases}
\sum_{v\in B} \nu(v) = \frac{n-k-1}2\\
\sum_{(u,v)\in E(B)} \nu(u)\nu(v) \leq \frac{(n-k-1)^2}{50}.
\end{cases}
\end{equation}
We use it to define a half $\mu$ in the whole graph $G$ as follows:
$$
\mu(v) \df \begin{cases}0,& v=v_0\\ 1,& v\in A\\ p\nu(v),& v\in B,\end{cases}
$$
where
$$
p\df \frac{n-2k}{n-k-1}.
$$
Then we have
\begin{equation} \label{eq:two_ways}
\beta(G,\mu) \leq \frac 1{n^2}\of{\frac{(n-2k)^2}{50} + \sum_{(u,v)\in E(A,B)}\mu(u)\mu(v)}.
\end{equation}
We will employ two different methods of bounding the term $\sum_{(u,v)\in E(A,B)}\mu(u)\mu(v)$.

Firstly, by \eqref{eq:edges_ab} we have
$$
\sum_{(u,v)\in E(A,B)}\mu(u)\mu(v) \leq \sum_{v\in B} \mu(v)=\frac n2-k
$$
and thus
\begin{equation} \label{eq:first_bound}
\begin{cases}\beta(G,\mu) &  \leq \frac 1{n^2} \of{\frac{(n-2k)^2}{50} + \frac n2-k} \\ & = \frac
1{50} - \frac 1{5n^2} \of{n(4k-25) +50k-4k^2}.\end{cases}
\end{equation}
We now start a case analysis.

\smallskip\noindent
{\bf Case 1. $k\geq 7$.}

In this case, since $n\geq 2k+1$, we have $n(4k-25) +50k-4k^2\geq (2k+1)(4k-25)
+50k-4k^2 = 4k^2+4k-25\geq 0$, and we are done by \eqref{eq:first_bound}.

\smallskip\noindent
{\bf Case 2. $k\leq 6$.}

This time, the same condition $n(4k-25) +50k-4k^2<0$ (that can be assumed w.l.o.g.)
provides a new {\em lower} bound on $n$
\begin{equation} \label{eq:lower_on_n}
n\geq \left\lceil {\frac{2k(25-2k)}{25-4k}} \right\rceil.
\end{equation}

To get an upper bound on $n$, we estimate the term $\sum_{(u,v)\in E(A,B)}\mu(u)\mu(v)$
as $(k-1)k$, simply because the degree of any vertex in $A$ is $\leq k$, and all of
them are adjacent to $v_0\not\in B$. Thus
$$
\beta(G,\mu) \leq \frac 1{n^2} \of{\frac{(n-2k)^2}{50} + (k-1)k} =\frac 1{50} -
\frac k{25n^2}(2n+25-27k).
$$
Hence we can also assume that
\begin{equation} \label{eq:upper_on_n}
n \leq \left\lceil \frac{27}2(k-1) \right\rceil
\end{equation}
which immediately rules out the case $k=1$. Also, \eqref{eq:lower_on_n} and
\eqref{eq:upper_on_n} rule out the case $k=6$ as well which leaves us with
the possibilities $k=2,3,4,5$ and 80 potential values for the pair $(k,n)$.

Instead of trying to do the remaining analysis manually, we employ a different
strategy. Namely, we record our argument in the form of ``unprocessed'' (and
recursive) bounds, without attempting to simplify them, and then we simply feed
the formulas to Maple to finish the job.

\smallskip
To start with, let $C\df V(G)\setminus (A\cup N_G(A))$ be the set of vertices
at distance $\geq 3$ from $v_0$; note that
$$
|C| \geq n-k^2-1.
$$
Let $R(3,u)$ be the off-diagonal Ramsey number; we will only use the following
well-known small values:
$$
R(3,0)=0,\ R(3,1)=1,\ R(3,2)=3,\ R(3,3)=6,\ R(3,4) = 9,\ R(3,5)=14.
$$
For every $u\in \left[ 0, \left\lceil n/2-k \right\rceil \right]$ such that $R(3,u) \leq n-k-1\ (=
|B|)$ we are going to derive its own bound $\beta(G)\leq \beta_u$, and then we
will minimize over all choices of $u$. So let us fix for the time being
some $u$ with the above properties.

\smallskip
Pick a subset $B_u\in {B\choose R(3,u)}$ with the only restriction that it contains
as many vertices in $C$ as possible. Then we have
\begin{equation} \label{eq:e_bound}
|E(A, B_u)| = |B_u\setminus C| = R(3,u) \dotdiv |C| \leq R(3,u) \dotdiv (n-k^2-1),
\end{equation}
where $x\dotdiv y\df \max(0,x-y)$.
Finally, let $B_u'\subseteq B_u$ be an independent subset of size $u$ existing
by the definition of Ramsey numbers. Further analysis splits into two more cases.

\smallskip\noindent
{\bf Case 2.1. $u=\left\lceil n/2-k\right\rceil$.}

If $n$ is even, we take the half $A\cup B_u'$. If $n$ is odd, we can assume w.l.o.g.
that $B_u'\cap N_G(A)\neq\emptyset$ (as otherwise we are done). Let $\mu$ be the half
obtained from $A\cup B_u'$ by removing half a vertex in $B_u'\cap N_G(A)$; this will
give us an extra saving of half-edge.

We have two different estimates on $|E(A, B_u')|$: one follows from \eqref{eq:e_bound}
and, on the other hand we, like before, have the trivial bound $|E(A,B_u')| \leq
n/2-k\leq u$ coming from \eqref{eq:edges_ab}. Summarizing,
\begin{equation} \label{eq:beta_u_case1}
\begin{cases}\beta(G) &\leq \beta_u\\ & \df \frac 1{n^2}\of{ \min(u, R(3,u)-
(n-k^2-1)) \dotdiv \frac 12 (n\bmod 2)}\\ & (u = \left\lceil n/2-k \right\rceil).\end{cases}
\end{equation}
Let us stress that this bound is defined only when $R\of{3,\left\lceil n/2-k \right\rceil}
\leq n-k-1$.

\smallskip\noindent
{\bf Case 2.2. $u< \left\lceil n/2-k\right\rceil$.}

In this case we have\footnote{We do not need the half-edge saving from the
previous case.}
\begin{equation} \label{eq:beta_u_case2}
\beta(G) \leq \beta_u \df \gamma(k,n,k+u, \min(u, R(3,u) \dotdiv (n-k^2-1))),
\end{equation}
where the function $\gamma(k,n,t,e)\ (t\leq n/2)$ abstracts our situation as
follows:
$$
\gamma(k,n,t,e) \df \max_{G,A} \min_{\mu|_A\equiv 1} \beta(G,\mu)\ (t\leq \left\lfloor
n/2 \right\rfloor).
$$
Here $G$ runs over all graphs with $n$ vertices and $\Delta(G)\leq k$, $A$
runs over all sets of vertices with $|A|=t$ and $\absvalue{E(A)} \leq e$
and $\mu$ runs over all halves containing $A$.
What remains is to give sufficiently good (for our purposes) recursive bounds on
$\gamma$.

First of all, when $n$ is even and $t=n/2$, we clearly have
\begin{equation} \label{eq:gamma_even}
\gamma(k,n,n/2,e) = \frac e{n^2}\ (n\ \text{is even}).
\end{equation}

Next, assume that $n$ is odd and $t=\frac{n-1}2$. Fix the worst-case $G,A$,
and let $e^\ast\leq e$ be the actual number of edges in $G|_A$. Then $|E(A,
V(G)\setminus A)|$ has at most $kt-2e^\ast$ edges. Hence there exists a
vertex $v\not\in A$ with
\begin{equation} \label{eq:good_v}
|N_G(V)\cap A| \leq \left\lfloor
\frac{kt-2e^\ast}{n-t}\right\rfloor.
\end{equation}
Adding to $A$ half of that vertex, we conclude
\begin{equation} \label{eq:gamma_odd}
\gamma(k,n,t,e) \leq \max_{0\leq e^\ast\leq e} \frac 1{n^2} \of{e^\ast
+\frac 12 \left\lfloor \frac{kt-2e^\ast}{n-t}\right\rfloor}\ (n\ \text{odd}, t =
\frac{n-1}2).
\end{equation}

Similarly, for smaller values of $t$ we apply recursion by
letting $A:= A \cup \{v\}$, where $v$ is he vertex satisfying
\eqref{eq:good_v}. This gives us
\begin{equation} \label{eq:gamma_recursive}
\gamma(k,n,t,e) \leq \gamma\of{k,n,t+1,\max_{0\leq e^\ast\leq e}\of{e^\ast+
\left\lfloor \frac{kt-2e^\ast}{n-t}\right\rfloor}}\ (t< \left\lfloor n/2\right\rfloor).
\end{equation}

This completes our description of $\beta_u$ in the case 2.2.

\smallskip
Finally, the ``master formula'' now reads as
\begin{equation} \label{eq:beta_u_min}
\beta(G) \leq \min\set{\beta_u}{0\leq u\leq \left\lceil n/2-k\right\rceil \land
R(3,u)\leq n-k-1}.
\end{equation}
The bounds \eqref{eq:beta_u_min}, \eqref{eq:beta_u_case1},
\eqref{eq:beta_u_case2}, along with recursive estimates \eqref{eq:gamma_even},
\eqref{eq:gamma_odd}, \eqref{eq:gamma_recursive} on the auxiliary function $\gamma$
suffice to complete the analysis of the 80 remaining cases. See the Maple
worksheet for details.

\section{Conclusion} \label{sec:concl}

In this paper we have proved several partial results on Erd\H{o}s's half-graph
conjecture. While they make this conjecture even more plausible, it still
remains wide open. The same is true for the last of Erd\H{o}s's conjectures
on this subject: prove that any triangle-free graph on $n$ vertices can be made
bi-partite by removing at most $\frac{n^2}{25}$ edges.

As for intermediate, and probably more accessible, goals we would like to ask to
extend Theorem \ref{thm:large_alpha} to a neighbourhood of the critical value
$\alpha = 2/5$, i.e. prove the half-graph conjecture for triangle-free graphs $G$
with $\alpha(G)\geq 2/5-\epsilon$ for a fixed constant $\epsilon>0$. As we noted
above, such an improvement if known for the minimum degree and the average degree
\cite{Kri,KeS2}.

We have highlighted the extremal problem of finding the minimal density
of quadriliterals in triangle-free graphs with given edge density and have given
its applications to the sparse half problem. Since this quantity can be viewed
(actually, in a quite precise sense) as the measure of non-randomness in a graph,
perhaps it might be worth studying in its own right.

\section*{Acknowledgment}
I would like to thank Andrzej Grzesik and Jan Volec for pointing out the
references \cite{GaM,KaOs} and for sharing with me the alternate argument
sketched at the end of Section \ref{sec:strongly_regular}.

\newcommand{\etalchar}[1]{$^{#1}$}

\end{document}